\theoremstyle{definition}
\newtheorem{definition}{Definition}
\newtheorem{example}{Example}
\theoremstyle{remark}
\newtheorem{observation}{Remark}
\theoremstyle{plain}
\newtheorem{proposition}{Proposition}
\definecolor{miamarillo}{RGB}{238,165,54}
\newcounter{todocounter}
\newcommand{\todonum}[2][]{\stepcounter{todocounter}\todo[#1,color=miamarillo]{\textbf{\thetodocounter}: #2}}
\newcommand{\todoin}[1]{\todonum[inline,color=miamarillo]{#1}}
\newcommand{\jhat}{\hat{\jmath}}
\newcommand{\xmip}{x^*_\text{MIP}}
\newcommand{\xmsp}{x^*_\text{MSP}}
\newcommand{\zmip}{z^*_\text{MIP}}
\newcommand{\zmsp}{z^*_\text{MSP}}
\begin{document}

\title{A risk-aversion approach for the Multiobjective Stochastic Programming problem}

\author[1,*]{Javier León}
\author[2]{Justo Puerto}
\author[1]{Begoña Vitoriano}

\affil[1]{HUM-LOG Research Group, Instituto de Matemática Interdisciplinar (IMI), Facultad de Ciencias Matemáticas, Universidad Complutense de Madrid, Plaza de las Ciencias 3, Madrid, 28040, Spain}
\affil[2]{Mathematical Research Institute (IMUS), University of Seville, Sevilla, Spain}
\affil[ ]{\url{javileon@ucm.es}, \url{puerto@us.es}, \url{bvitoriano@mat.ucm.es}}
\affil[*]{Corresponding author}

\maketitle
\begin{abstract}
	Multiobjective stochastic programming is a field well located to tackle problems arising in emergencies, given that uncertainty and multiple objectives are usually present in such problems. A new concept of solution is proposed in this work, especially designed for risk-aversion solutions. A linear programming model is presented to obtain such solution.
\end{abstract}

\emph{Keywords}:
Multiobjective stochastic programming; Linear programming; Risk aversion


\section{Introduction}
\todoin{Vida real, múltiples criterios, aversión al riesgo}

Decision making is never easy, yet we often have to make decisions. Emergencies and disaster management are fields in which many difficulties often arise, such as high uncertainty and multiple conflicting objectives. To overcome such difficulties, risk-aversion decisions are usually sought. Risk-aversion is the attitude for which we prefer to lower uncertainty rather than gambling extreme outcomes (positive or negative).

Risk-aversion, although typically studied in problems with uncertainty, can as well be considered when making decisions with multiple criteria. For instance, in the field of disaster management, solutions that are sufficiently good for all criteria are usually preferred than others that perform exceptionally good for some criteria but inadequately for the others.

Multicriteria decision making (MCDM) is a field worth of consideration when studying real-world problems. Such is the case that MCDM techniques have been recently used for solving problems as varied as:
disaster management \citep{Gutjahr2016351,Ferrer2018501}, engineering \citep{Sun2018602}, finance \citep{Karsu2015343,Angilella2015540}, forest planning \citep{Fotakis20151}, healthcare \citep{Guido2017270}, location of waste facilities \citep{Eiselt2015305}, police districting \citep{Liberatore2016}, route planning \citep{Bast201619}, train scheduling \citep{Sama2015146} or urban planning \citep{Spina201588,Carli2018223}.

This situation, in which multiple conflicting objectives have to be optimized, has led to the definition of different solution concepts and methodologies. Depending on the problem and the type of solution considered, a specific methodology should be applied.

The concept of efficiency reflects the intuition that for a solution to be acceptable, another cannot exist improving that one in every objective. Multiple notions of efficiency are available. The notation that this paper follows is the given in \cite{Ehrgott2005}.
\begin{definition}[Efficiency, \cite{Ehrgott2005}] \label{def:eficiencia}
	Let $f_1(x),\dots,f_K(x)$ be objective functions to be minimized, and let $X$ be the feasible set. A feasible solution $\hat{x} \in X$ is called:

	\begin{itemize}
		\item \emph{Weakly efficient} if there is no $x \in X$ such that $f(x) < f(\hat{x})$ i.e. $f_k(x) < f_k(\hat{x})$ for all $k=1,\dots,K$.
		\item \emph{Efficient} or \emph{Pareto optimal} if there is no $x \in X$ such that $f_k(x) \le f_k(\hat{x})$ for all $k=1,\dots,K$ and $f_i(x) < f_i(\hat{x})$ for some $i \in \{1,\dots,K\}$.
		\item \emph{Strictly efficient} if there is no $x \in X$, $x \neq \hat{x}$ such that $f(x) \le f(\hat{x})$.	
	\end{itemize}
\end{definition}

Furthermore, the set of efficient solutions is called the \emph{efficient set}, and the image under $f$ of this set is the \emph{nondominated} set. It is reasonable to assume that the solution given to any problem must lie in the efficient set. 


Uncertainty is another feature present in the studied problems, in which risk-averse decisions will be preferred. The most common ways for dealing with the uncertainty are \emph{stochastic programming} and \emph{robust optimization}, in which \emph{fuzzy optimization} is also included \citep{Rommelfanger2004}.

The different approaches for treating uncertainty do not respond to the desires of the modeller, instead, they reflect the nature of the uncertainty.
If the uncertainty comes with an underlying known or estimated probability distribution, then stochastic programming is used.
On the other hand, if uncertainty comes from a lack of precision or semantic uncertainty, then robust optimization is used.
Robust optimization does not assume a known (or existing) distribution \citep{BENTAL19991,chen2007,KLAMROTH2017}.
A recent review of robust optimization is written in \cite{GABREL2014471}.
For an introduction to stochastic programming the reader is referred to \cite{Birge2011}.

Stochastic programming is the widest used technique when there are historical data or information to infer a probability distribution. Moreover, usually discrete distributions are used, calling scenarios the different values. The concepts of \emph{value-at-risk} (VaR) and \emph{conditional value-at-risk} (CVaR) are widely used for quantifying risk (see for instance \cite{YAO20131014,Mansini2015,Liu2017,Dixit2019,FERNANDEZ2019215}).
They are typically defined for losses distributions in finance, where the right tail of the distributions are of interest.


\begin{definition}[CVaR, \cite{ROCKAFELLAR20021443}]
	Given $F_X(x)$ distribution function, and $\beta \in [0,1]$, the $\beta$-CVaR is the conditional expectancy over $\{x: F_X(x) \ge \beta \}$.
\end{definition}


Consider now the following problem, in which multiple objectives to be minimized and uncertainty are included simultaneously:
\begin{equation*}
	\min_{x \in X} \left( f_1(x,\omega), \dots, f_K (x,\omega)\right)
\end{equation*}
The above problem is typically called \emph{multiobjective stochastic programming problem} (MSP), especially if $\omega$, the uncertainty source, has a known probability distribution.

In this paper we introduce a new solution concept in multiobjective stochastic programming based on risk-aversion preferences. Such concept is complemented with a mathematical programming model to efficiently compute it, and computational experiments are performed to assess its strengths.

\paragraph{Structure of paper}
The remaining of this paper is organized as follows. Section~\ref{sec:defs} includes the definition of a novel concept of solution for MSP problems and studies its properties. Section~\ref{sec:firstexample} illustrates a basic example of how this solution can be found if the decision space is finite and small. 

Section~\ref{sec:MPmodel} shows how to obtain such a solution with a linear programming model. An application to the multicriteria knapsack problem is developed in Section~\ref{sec:knap}. 
\section{Multiobjective stochastic programming}\label{sec:defs}
\subsection{Literature review}
\cite{Goicoechea1980} develops PROTRADE method, where utility functions are defined to aggregate objectives into a single objective stochastic problem. The resulting problem is solved with an interactive method, where the decision-maker defines an expected solution and a feasibility probability.
\cite{Leclercq1982} reduces the stochasticity by adding some \emph{good} measures to the list of objectives, such as the mean, variance, or probability of being over/below a threshold.
The resulting multiobjective deterministic problem is solved aggregating the objectives, but it could be solved via other techniques.

\cite{Caballero2004} compare the \emph{stochastic approach} with the \emph{multiobjective approach} when using different techniques. The \emph{stochastic approach} transforms the MSP on a single-objective stochastic problem, while the \emph{multiobjective approach} first reduces the stochasticity transforming the MSP on a deterministic multiobjective problem. They highlight that \emph{``the multiobjective approach \emph{forgets} the possible existence of stochastic dependencies between objectives.''}
\cite{Aouni2005} study stochastic goal programming, where the deviation of the objective functions to some goals set beforehand to stochastic values is minimized.

In \cite{BenAbdelaziz2010} a chance-constrained compromise approach is proposed, with an example presented in \cite{Abdelaziz2007}.
In \cite{Munoz2010} the \emph{INTEREST} method is proposed. It is an interactive reference point method. The decision-maker gives reference levels $u_i$ and probabilities $\beta_i$, hoping to achieve a solution $x^*$ such that $\mathbb{P}\left(f_i(x^*) \le u_i \right) \ge \beta_i$. If this is infeasible, the decision-maker should either increase the reference levels or decrease the probabilities of achievement.
\cite{BenAbdelaziz2012} reviews different solutions methods for the MSP problem, categorizing them as stochastic approach or multiobjective approach.

Some fields where MSP models have been developed are: forest management \citep{ALVAREZMIRANDA201879}, multiple response optimization \citep{DIAZGARCIA2014}, energy generation \citep{Teghem1986,BATH2004}, energy exchange \citep{GAZIJAHANI2018}, capacity investment \citep{Claro201085}, disaster management \citep{Manopiniwes2016,Bastian2016}, portfolio optimization \citep{Tuncerakar2012} and cash management \citep{SalasMolina2019}, among others

\subsection{Definitions and dominance relationship}
The concept of CVaR allows to aggregate several scenarios by just looking at what happens in the worst ones. The \emph{ordered weighted averaging} (OWA) operators are defined in \cite{Yager1988}, and independently in the field of locational analysis \cite{carrizosa94,Nickel1999} under the name of \emph{ordered median function}. These concepts will allow us to aggregate different criteria by looking at the least desirable ones, as a risk-aversion measure.

\begin{definition}[OWA, \cite{Yager1988}]
	Given $a_1,\dots,a_n \in \mathbb{R}$, the \emph{ordered weighted averaging} (OWA) operator with weights $\lambda_1,\dots,\lambda_n$ is defined as:
	$$ OWA(a_1,\dots,a_n) = \sum_i \lambda_i a_{(i)}$$
	where $\left(a_{(1)},\dots,a_{(n)}\right)$ is the ordered vector from largest to smallest $\left(a_1,\dots,a_n\right)$.
\end{definition}

\begin{observation}
	For certain weights, the OWA represents a known quantity:
	\begin{itemize}
		\item If $\lambda_i = \frac{1}{n}$, the resulting OWA is the average of $a$.
		\item If $\lambda_1 = 1$, and $\lambda_j = 0$ for $j>1$, the OWA is the maximum of $a$.
		\item If $\lambda_n = 1$, and $\lambda_j = 0$ for $j<n$, the OWA is the minimum of $a$.
	\end{itemize}
\end{observation}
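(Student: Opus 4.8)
The plan is to verify the three identities one at a time, each directly from the definition of the OWA operator, since all three amount to evaluating the sum $\sum_i \lambda_i a_{(i)}$ for one particular choice of weights. The only structural fact I would rely on is that $(a_{(1)},\dots,a_{(n)})$ is a permutation of $(a_1,\dots,a_n)$ sorted from largest to smallest; consequently reordering preserves the total sum, $\sum_i a_{(i)} = \sum_i a_i$, and by the sorting convention $a_{(1)} = \max_i a_i$ while $a_{(n)} = \min_i a_i$.

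For the averaging case I would substitute $\lambda_i = \frac{1}{n}$ to obtain $\sum_i \frac{1}{n} a_{(i)} = \frac{1}{n}\sum_i a_{(i)} = \frac{1}{n}\sum_i a_i$, the last equality being permutation-invariance of the sum, which is exactly the arithmetic mean. For the maximum, the weights $\lambda_1 = 1$ and $\lambda_j = 0$ for $j>1$ collapse the sum to the single surviving term $a_{(1)} = \max_i a_i$. The minimum is entirely symmetric: $\lambda_n = 1$ and $\lambda_j = 0$ for $j<n$ leave only $a_{(n)} = \min_i a_i$.

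There is no genuine difficulty here; the one point that merits care is the orientation of the ordering. Because the paper sorts from largest to smallest, $a_{(1)}$ is the maximum and $a_{(n)}$ the minimum, which is the reverse of the ascending convention common elsewhere in the OWA literature. I would flag this convention once at the outset so that the identifications in the two extremal cases are unambiguous.
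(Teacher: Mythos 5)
Your verification is correct and follows the only natural route: direct substitution of each weight vector into the definition $\sum_i \lambda_i a_{(i)}$, using permutation-invariance of the sum for the average and the descending sort convention ($a_{(1)}=\max$, $a_{(n)}=\min$) for the two extremal cases. The paper states this remark without proof precisely because it is immediate from the definition, so your argument matches the intended justification; your explicit flag about the largest-to-smallest ordering is a sensible precaution.
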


\cite{YAGER201689} later study how to assign weights for an OWA when criteria have different importances.

\begin{definition}[OWA with importances, \cite{YAGER201689}]\label{def:owaimport}
	Given $a_1,\dots,a_n \in \mathbb{R}$ with importances $u_1,\dots,u_n$ such that $\sum_i u_i = 1$ the weights $\lambda_j$ for the OWA can be calculated with $f$, the \emph{weight generating function} in the following manner:
	\begin{enumerate}
		\item Sort vector $a$ such that $a_{(1)} \ge a_{(2)} \ge \dotsc \ge a_{(n)}$.
		\item With $(\cdot)$ as the order induced by $a$, define $T_j = \sum_{k=1}^j u_{(k)}$.
		\item Let $f$ be a function such that $f:[0,1]\to[0,1]$ and $f(0)=0, f(1)=1$. This function is called \emph{weight generating function}.
		\item Obtain the weights as $\lambda_j = f(T_j) - f(T_{j-1})$.
	\end{enumerate}
\end{definition}

\newcommand{\sub}[1]{_{(#1)}}
\begin{example}[of Definition~\ref{def:owaimport}] \label{ex:weightgenfunct}
	Consider the following weight generating function, for a given $r \in (0,1]$:
	$$ f(x) = \begin{cases}
		\frac{x}{r} & \text{if } x<r\\
		1           & \text{if } x\ge r
	\end{cases}$$

	Let $(\cdot)$ be the order such that $a\sub{1}\ge\dots\ge a\sub{n}$, $u\sub{j}$ the weight associated to $a\sub j$, and also let $T_j = \sum_{k=1}^j u_{(k)}$.
	We shall see know how the weights are obtained from $f$. Let $j^*$ be such that $T_{j^*-1} < r \le T_{j^*}$.
	\begin{itemize}
		\item $\lambda_1 = f(T_1) = f(u\sub{1}) = \frac{u\sub{1}}{r}$, assuming $u\sub{1}<r$
		\item $\lambda_2 = f(T_2) - f(T_1) = f(u\sub{1} + u\sub{2}) - f(u\sub{1})= \frac{u\sub{1}+u\sub{2}}{r} - \frac{u\sub{1}}{r} = \frac{u\sub{2}}{r}$, assuming $u\sub{1} + u\sub{2}<r$
		\item \dots
		\item $\lambda_{j^*} = f(T_{j^*}) - f(T_{j^*-1}) = 1 - \left(\frac{u\sub{1}+u\sub{2}+\dots+u\sub{j^*-1}}{r}\right)$, since $T_{j^*}\ge r$
		\item $\lambda_{j^*+1} = f(T_{j^*+1}) - f(T_{j^*}) = 1 - 1 = 0$
		\item \dots
		\item $\lambda_{n} = f(T_{n}) - f(T_{n-1}) = 1 - 1 = 0$
	\end{itemize}

	Consequently the OWA of $a_1,\dots,a_n$ with importances $u_1,\dots,u_n$ is:
	\begin{align*}
	  OWA = & \frac{u\sub{1}}{r}a\sub{1} + \frac{u\sub{2}}{r}a\sub{2} + \dots + \left[1 - \left(\frac{u\sub{1}+u\sub{2}+\dots+u\sub{j^*-1}}{r}\right)\right] a\sub{j^*} \\
	      = & \frac{u\sub{1}a\sub{1} + u\sub{2}a\sub{2} + \dots +\left(r - u\sub{1} - u\sub{2} - \dots\right)a\sub{j^*}}{r}
	\end{align*}
	That is, the OWA is the average of the worst $a_j$, weighted by their importances, with total importance adding up to $r$
\end{example}

The starting point of this paper is the recurrent idea of representing ordered weighted or ordered median operators by means of $k$-sums. $k$-sums (or $k$-centra in the location analysis literature) are sums of the $k$-largest terms of a vector \citep{Puerto2017}. One can trace back, at least to \cite{KALCSICS2002149}, the use of $k$-sums to represent ordered median objectives. More recent references are for instance \cite{BLANCO20131448,Blanco:2014:RSP:2633217.2633282,PONCE2018127} and \cite{FILIPPI2019156}. This last reference introduces a normalized version of $k$-centrum, named $\beta$-average that will be used in our paper.

Through the remaining of the paper consider that $f_k^j(x)$ are functions to be minimized within a feasible set $X$, with $k = 1,\dots,K$ representing $K$ different objectives with importances $w_k$ and $j = 1,\dots,J$ encoding $J$ different scenarios with probabilities $\pi_j$.

\begin{definition}[$\beta$-average, $g^\beta_k(x)$, \cite{FILIPPI2019156}] \label{def:betaaverage}
	
	Given $\beta \in (0,1]$, for each criterion $k$ it can be defined $g^\beta_k(x)$ which measures the average of $f$ on the worst scenarios $\left(f_k^1(x),\dots,f_k^J(x)\right)$, with accumulated probability equal to $\beta$.
\end{definition}

\begin{observation}[\cite{FILIPPI2019156}]
	Given a value $\beta$, if the sum of the probabilities of the worst scenarios is exactly $\beta$, then the $\beta$-average is exactly $\left( 1 - \beta\right)$-CVaR.
\end{observation}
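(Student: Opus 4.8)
The plan is to fix a single criterion $k$ and a feasible point $x$, and to regard the $J$ values $f_k^1(x),\dots,f_k^J(x)$ as the realisations of a random loss $X$, where $f_k^j(x)$ occurs with probability $\pi_j$. Since the $f_k^j$ are to be minimised, the ``worst'' scenarios are those with the largest values, which is precisely the right tail on which the CVaR is built; this alignment of tails is what makes the two notions comparable in the first place. First I would relabel the scenarios in non-increasing order $f_k^{(1)}(x)\ge\dots\ge f_k^{(J)}(x)$ with associated probabilities $\pi_{(1)},\dots,\pi_{(J)}$, and invoke the hypothesis to fix an index $m$ with $\pi_{(1)}+\dots+\pi_{(m)}=\beta$. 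By Definition~\ref{def:betaaverage} the $\beta$-average then reads $g^\beta_k(x)=\tfrac{1}{\beta}\sum_{j=1}^m \pi_{(j)}\,f_k^{(j)}(x)$, the probability-weighted mean of the $m$ worst scenarios.

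Next I would unwind the definition of CVaR at level $1-\beta$. Evaluating the distribution function at the scenario values gives $F_X\big(f_k^{(i)}(x)\big)=1-\sum_{l<i}\pi_{(l)}$, so the condition $F_X(\cdot)\ge 1-\beta$ is equivalent to $\sum_{l<i}\pi_{(l)}\le\beta$, that is, it selects the worst scenarios together with the value-at-risk scenario $(m+1)$. The $(1-\beta)$-CVaR is the conditional expectation of $X$ over this upper tail, and the heart of the argument is to show it equals $\tfrac{1}{\beta}\sum_{j=1}^m \pi_{(j)}\,f_k^{(j)}(x)$, matching $g^\beta_k(x)$.

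The main obstacle is the boundary behaviour at the value-at-risk $f_k^{(m+1)}(x)$: the level set $\{F_X\ge 1-\beta\}$ touches this atom, and taken naively it would be included, inflating both the numerator and the conditioning mass. This is exactly where the hypothesis that the probability of the worst scenarios is \emph{exactly} $\beta$ does the work: because $\pi_{(1)}+\dots+\pi_{(m)}=\beta$, the mass strictly above the value-at-risk already equals $\beta$, so no fraction of the atom at $f_k^{(m+1)}(x)$ is needed to complete the tail. I would make this transparent by writing the CVaR in the Rockafellar--Uryasev form $\min_\zeta\{\zeta+\tfrac{1}{\beta}\,\mathbb{E}[(X-\zeta)^+]\}$ and taking $\zeta=f_k^{(m+1)}(x)$: the explicit $\zeta$ cancels against the $-\zeta$ produced inside $\tfrac{1}{\beta}\,\mathbb{E}[(X-\zeta)^+]$ precisely because the total mass above $\zeta$ is $\beta$, leaving $\tfrac{1}{\beta}\sum_{j=1}^m \pi_{(j)}\,f_k^{(j)}(x)$. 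Presenting the computation through this representation isolates the role of the exactness hypothesis; without it a scenario would have to be split and the $\beta$-average and the $(1-\beta)$-CVaR would differ by exactly the atom-splitting correction. Finally I would sanity-check the identity against the extreme cases recorded in the earlier remark, namely $\beta$ equal to the probability of the single worst scenario, and $\beta=1$ recovering the plain expectation.
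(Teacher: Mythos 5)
The paper offers no proof of this observation: it is stated as a remark and attributed to \cite{FILIPPI2019156}, so there is nothing in-paper to compare against, and your proposal must stand on its own. It does: the argument is correct, and your choice to route the computation through the Rockafellar--Uryasev variational form $\min_\zeta\bigl\{\zeta+\tfrac{1}{\beta}\,\mathbb{E}[(X-\zeta)^+]\bigr\}$ rather than the paper's literal phrase ``conditional expectancy over $\{x:F_X(x)\ge 1-\beta\}$'' is the right one --- as you note, the literal reading drags the whole atom at the value-at-risk into the conditioning event and would make the identity false, whereas the definition actually intended by the cited source \cite{ROCKAFELLAR20021443} is the atom-splitting one you use. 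Your diagnosis that the exactness hypothesis $\pi_{(1)}+\dots+\pi_{(m)}=\beta$ is precisely what makes the $\zeta$ terms cancel is the key point. Three small things to tighten. First, evaluating the objective at $\zeta=f_k^{(m+1)}(x)$ only shows $\mathrm{CVaR}\le g^\beta_k(x)$; you should either invoke the standard fact that the relevant quantile minimizes the Rockafellar--Uryasev objective, or add the one-line reverse bound $\zeta+\tfrac1\beta\sum_j\pi_j\bigl(f_k^j(x)-\zeta\bigr)^+\ge\tfrac1\beta\sum_{j=1}^m\pi_{(j)}f_k^{(j)}(x)$ for every $\zeta$, which is exactly the weak-duality inequality behind the paper's own formulation~\eqref{eq:defg}. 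Second, the identity $F_X\bigl(f_k^{(i)}(x)\bigr)=1-\sum_{l<i}\pi_{(l)}$ presumes distinct scenario values; ties are harmless for the variational computation (the tied terms contribute zero positive part) but the sentence should be qualified. Third, the case $m=J$ (i.e.\ $\beta=1$) needs a separate word since $f_k^{(m+1)}(x)$ does not exist there, though any $\zeta\le\min_j f_k^j(x)$ works and recovers the plain expectation as you anticipate.
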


\begin{example}\label{ex:betaav}
	Consider a point $x$, a fixed criterion $k$ and 5 different scenarios with probabilities $\pi_j$ and values of $f_k^j$ given. Table~\ref{tab:primerosbetaprom} shows the $\beta$-averages for different values of $\beta$, in which the scenarios have been ordered from largest value of $f$ to smallest.
	\begin{itemize}
		\item For $\beta = 0.2$, the scenario $j=1$ is the only one needed to obtain the worst scenario with probability $0.2$, and hence $g^\beta_k(x) = \frac{0.2 \cdot 10}{0.2} = 0.2$.
		\item When $\beta$ equals 0.3 it is necessary to include scenario 2, obtaining a $\beta$-average of $\frac{0.2 \cdot 10 + 0.1 \cdot 7}{0.3} = 9$.
		\item Finally if $\beta = 0.5$ scenario 3 needs to be added as well, but only with the probability needed until reaching $0.5$: $g^\beta_k(x) = \frac{0.2 \cdot 10 + 0.1 \cdot 7 + 0.2\cdot 4}{0.5} = 7$.
	\end{itemize}

	\begin{table}[ht!]
		\centering
		\caption{Small example of $\beta$-average for different values of $\beta$}
		\label{tab:primerosbetaprom}
		\begin{tabular}{r|ccccc|ccc}
		              & \multicolumn{5}{c|}{scenario}  & \multicolumn{3}{c}{$\beta$} \\
			           & 1    & 2   & 3   & 4    & 5    & $0.2$               & $0.3$              & $0.5$\\ \hline
			   $\pi_j$ & 0.2  & 0.1 & 0.3 & 0.25 & 0.15 & \multirow{2}{*}{10} & \multirow{2}{*}{9} & \multirow{2}{*}{7}\\
			$f_k^j(x)$ & 10   & 7   & 4   & 3    & 2    &                     &                    &
		\end{tabular}
	\end{table}
\end{example}

When using the $\beta$-average the functions $f_k^j(x)$ were transformed into $g_k^\beta(x)$, a collection of $K$ functions not depending on the scenario. An OWA will be defined now, via its weight generating function, that will reduce the $K$ $\beta$-averages into a scalar function.
\begin{definition}[$r$-OWA, $O_r(x)$] \label{def:r-OWA}
	Given $x_i \in \mathbb{R}$ with importance $w_i$ ($i=1,\dots,K$, $w_i \ge 0, \sum_i w_i = 1$) and $r \in (0,1]$, the function $O_r(x)$ is defined as the OWA with the following weight generating function:
	$$ f(x) = \begin{cases}
		\frac{x}{r} & \text{if } x<r\\
		1           & \text{if } x\ge r
	\end{cases}$$
\end{definition}
\todoin{We should be careful with the multiple meanings of $w$ (importances for criteria and weights for OWA's). Also with $f_k^j(x)$ as the original functions and the weight generating function}
\begin{observation}
	The definition of $O_r(x)$ is made on a similar manner that the one given of the $\beta$-average (Definition~\ref{def:betaaverage}), but it is done on a context with importances rather than probabilities. Example~\ref{ex:rowa} shows the similarities between both approaches.
\end{observation}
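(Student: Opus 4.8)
The plan is to turn the informal statement into a precise one and then verify it by direct computation: I will show that the scalar produced by $O_r$ is, algebraically, the very same truncated weighted average that Definition~\ref{def:betaaverage} uses for the $\beta$-average, with the criteria importances $w_i$ taking the role played there by the scenario probabilities $\pi_j$ and with $r$ taking the role of $\beta$.

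First I would note that the weight generating function in Definition~\ref{def:r-OWA} is literally the function $f$ already analyzed in Example~\ref{ex:weightgenfunct}. Hence I may quote that computation verbatim: ordering the values so that $x_{(1)} \ge \dots \ge x_{(K)}$ and writing $T_j = \sum_{k=1}^{j} w_{(k)}$ for the accumulated importances, the induced OWA weights are $\lambda_i = w_{(i)}/r$ up to the pivot index $j^*$ defined by $T_{j^*-1} < r \le T_{j^*}$, with a truncated weight on the pivot term and zero weights thereafter. Substituting the values for the $a$'s and the importances for the $u$'s, this yields
\[
O_r(x) = \frac{1}{r}\left( w_{(1)} x_{(1)} + \dots + \bigl(r - T_{j^*-1}\bigr) x_{(j^*)} \right),
\]
i.e. the importance-weighted average of the worst-ranked criteria, taken until the included importance totals exactly $r$.

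Then I would place this formula side by side with Definition~\ref{def:betaaverage}: the $\beta$-average orders the scenarios from worst to best and averages them weighted by their probabilities until the accumulated probability equals $\beta$, truncating the last scenario whenever the probabilities overshoot. The two expressions coincide under the dictionary $(w_i,\,r,\,\text{criteria}) \leftrightarrow (\pi_j,\,\beta,\,\text{scenarios})$, which is exactly the content of the observation; the promised Example~\ref{ex:rowa} then only has to exhibit this correspondence numerically, as a mirror of Table~\ref{tab:primerosbetaprom}. The single point needing care — and the only real (if mild) obstacle — is the boundary term: one must check that the pivot weight $f(T_{j^*}) - f(T_{j^*-1}) = 1 - T_{j^*-1}/r$ is well defined and nonnegative, which holds precisely because $T_{j^*-1} < r \le T_{j^*}$, and that it matches the partial inclusion of the last scenario in the $\beta$-average. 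Once this term is matched, the analogy is established.
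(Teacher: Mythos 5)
Your argument is correct and follows essentially the same route as the paper: the general weight computation you quote is exactly the content of Example~\ref{ex:weightgenfunct}, and the paper itself supports this remark only by the numerical parallel in Example~\ref{ex:rowa}. Your version merely makes explicit, in general form, the truncated-weighted-average formula and the nonnegativity of the pivot weight that the paper leaves at the level of worked examples.
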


\begin{example}\label{ex:rowa}
	Consider a point $x$ and let $g_k(x)$ be the evaluation of $x$ under 5 different criteria with importances $w_j$. Table~\ref{tab:primerosOWAs} shows the $r$-OWAs for different values of $r$, in which the criteria have been ordered from largest values of $g_k(x)$ to smallest. Consider the case $r=0.5$:
	\begin{enumerate}
		\item As $g_k(x)$ are already ordered for largest to smallest, the values of $T_k$ are:
		$$ T_1 = 0.2,T_2 = 0.2+0.1 = 0.3, T_3 = 0.6, T_4 = 0.85, T_5 = 1$$
		\item The values of $T_k$ under $f$:
		$$ f(T_1) = \frac{0.2}{0.5}, f(T_2) = \frac{0.3}{0.5}, f(T_3) = f(T_4) = f(T_5) = 1 $$
		\item The weights of the OWA:
		$$ \lambda_1 = \frac{0.2}{0.5}, \lambda_2 = \frac{0.3-0.2}{0.5} = \frac{0.1}{0.5}, \lambda_3 = 1-\frac{0.3}{0.5} = \frac{0.2}{0.5}, \lambda_4 = \lambda_5 = 0$$
		\item Consequently the $r$-OWA is:
		$$ r\text{-OWA} = \frac{0.2x_{(1)}+0.1x_{(2)}+0.2x_{(3)}}{0.5} = \frac{0.2\cdot 10+0.1\cdot 7+0.2\cdot 4}{0.5} = 7$$
	\end{enumerate}

	\begin{table}[ht!]
		\centering
		\caption{Small example of $r$-OWA for different values of $r$}
		\label{tab:primerosOWAs}
		\begin{tabular}{r|ccccc|ccc}
		            & \multicolumn{5}{c|}{criterion}  & \multicolumn{3}{c}{$r$} \\
			         & 1    & 2   & 3   & 4    & 5    & $0.2$ & $0.3$ & $0.5$\\ \hline
			$w_k$    & 0.2  & 0.1 & 0.3 & 0.25 & 0.15 & \multirow{2}{*}{10} & \multirow{2}{*}{9} & \multirow{2}{*}{7}\\
			$g_k(x)$ & 10   & 7   & 4   & 3    & 2    & & &
		\end{tabular}
	\end{table}
\end{example}

\begin{observation} \label{obs:maximumlambdas}
	Given $x_1,\dots,x_K$ and its associated importances $w_1,\dots,w_K$, then the $\lambda_k$ of the $r$-OWA are determined in such a way that:

	$$O_r(x) = \max \left\lbrace \frac{\tilde{\lambda}_1 x_1 + \dots +\tilde{\lambda}_K x_K}{r} \mid \tilde{\lambda}_k \le w_k, \sum \tilde{\lambda}_k = r \right\rbrace
	\quad \text{with } \lambda_k = \frac{\tilde{\lambda}_k}{r}
	$$	
\end{observation}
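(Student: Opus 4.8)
The plan is to recognize the right-hand side as a continuous (fractional) knapsack linear program and to show that its optimal solution coincides with the weights produced by the weight generating function of Definition~\ref{def:r-OWA}. Throughout, the implicit constraint $\tilde\lambda_k \ge 0$ is understood (so that the $\lambda_k = \tilde\lambda_k / r$ are genuine OWA weights), and the feasible region is $\{\tilde\lambda : 0 \le \tilde\lambda_k \le w_k,\ \sum_k \tilde\lambda_k = r\}$.

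First I would fix the ordering $(\cdot)$ so that $x\sub{1} \ge x\sub{2} \ge \dots \ge x\sub{K}$, with $w\sub{k}$ the importance attached to $x\sub{k}$, and set $T_j = \sum_{k=1}^j w\sub{k}$. The feasible region is nonempty because $\sum_k w_k = 1 \ge r$, and it is a bounded polytope, so the maximum is attained. Reading the objective $\frac1r \sum_k \tilde\lambda_k x_k$ as the distribution of a total budget $r$ of weight among items with per-unit values $x_k$ and capacities $w_k$, the natural candidate maximizer is the greedy allocation: assign full weight to the largest values until the budget is exhausted, i.e. $\tilde\lambda\sub{k} = w\sub{k}$ for $k < j^*$, $\tilde\lambda\sub{j^*} = r - T_{j^*-1}$, and $\tilde\lambda\sub{k} = 0$ for $k > j^*$, where $j^*$ satisfies $T_{j^*-1} < r \le T_{j^*}$.

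To prove optimality I would use an exchange argument. Given any other feasible $\tilde\lambda$, since both allocations sum to $r$, whenever $\tilde\lambda$ places strictly less weight than the greedy solution on some higher-value coordinate it must place strictly more on some lower-value coordinate. Transferring weight from the latter to the former is feasible (the higher-value coordinate still has slack below its cap $w_k$, and the lower-value coordinate stays $\ge 0$) and does not decrease the objective because the value moved is multiplied by a larger $x$. Iterating turns any feasible point into the greedy one without decreasing the objective, so the greedy allocation is optimal. Finally I would match this allocation with the $r$-OWA: the weights obtained are precisely $\tilde\lambda\sub{k} = r\,\lambda\sub{k}$ with $\lambda\sub{k} = f(T_k) - f(T_{k-1})$ for the weight generating function of Definition~\ref{def:r-OWA}, exactly as computed term by term in Example~\ref{ex:weightgenfunct}. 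Substituting yields $\frac1r \sum_k \tilde\lambda\sub{k}\, x\sub{k} = \sum_k \lambda\sub{k}\, x\sub{k} = O_r(x)$, the claimed identity.

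The main obstacle is making the optimality proof watertight in all cases — in particular, handling ties among the values $x\sub{k}$ (where the greedy maximizer is no longer unique, although every greedy choice attains the same objective) and the single fractional allocation at the boundary index $j^*$, so that each elementary transfer in the exchange argument is both feasible and nondecreasing. An alternative, and arguably cleaner, route that I would consider is to certify optimality by linear programming duality: exhibit a scalar multiplier for the equality constraint together with nonnegative multipliers for the upper bounds $\tilde\lambda_k \le w_k$ that satisfy complementary slackness with the greedy primal solution, which sidesteps the case distinctions entirely.
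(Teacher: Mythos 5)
Your proof is correct, and it is worth noting that the paper itself never proves this statement: it is left as a bare remark (the source even carries a disabled editorial note ``maybe this needs to be explicitly proven''), so your argument supplies exactly what is missing rather than duplicating or diverging from an existing proof. The identification of the right-hand side as a continuous knapsack LP is the right move: the greedy allocation $\tilde{\lambda}_{(k)} = w_{(k)}$ for $k < j^*$, $\tilde{\lambda}_{(j^*)} = r - T_{j^*-1}$, $\tilde{\lambda}_{(k)} = 0$ for $k > j^*$ divided by $r$ reproduces term by term the weights $\lambda_{(k)} = f(T_k) - f(T_{k-1})$ computed in Example~\ref{ex:weightgenfunct}, the exchange argument (or the complementary-slackness certificate you propose as an alternative, with the dual variable of the equality constraint set to $x_{(j^*)}/r$ and the multiplier of $\tilde{\lambda}_{(k)} \le w_{(k)}$ set to $(x_{(k)} - x_{(j^*)})/r$ for $k < j^*$ and $0$ otherwise) settles optimality, and ties among the $x_{(k)}$ are harmless because any two greedy allocations then differ only on coordinates carrying equal values.

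One point you make in passing deserves emphasis, because it is an actual defect of the statement as printed: the constraint $\tilde{\lambda}_k \ge 0$ is not implicit but essential. Without it the feasible set still yields a bounded LP (its recession cone is trivial), but the optimal value can strictly exceed $O_r(x)$: with $x = (10,5,0)$, $w_k = \tfrac13$ and $r = \tfrac13$, the point $\tilde{\lambda} = (\tfrac13, \tfrac13, -\tfrac13)$ is feasible for the printed constraints and gives objective value $15$, whereas $O_r(x) = 10$. So your decision to state the feasible region as $\{\tilde{\lambda} : 0 \le \tilde{\lambda}_k \le w_k,\ \sum_k \tilde{\lambda}_k = r\}$ is not a cosmetic clarification but a necessary correction, and the nonemptiness check $\sum_k w_k = 1 \ge r$ for $r \in (0,1]$ closes the remaining loose end.
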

\todoin{Maybe this needs to be explicitly proven}

Given $r, \beta \in (0,1]$ and $x \in X$, let us introduce the function $h^\beta_r(x)$ as the $r$-OWA of the $\beta$-averages. That is:
$$ h^\beta_r(x) = O_r\left(g_1^\beta(x),\dots,g_K^\beta(x)\right)$$

\todoin{I think $\beta$ and $r$ could be given some names. For instance, \emph{uncertainty} and \emph{multicriteria} risk-seeking parameters. When $\beta$ is small very few scenarios are considered, so we are very protected against uncertainty risk. Same goes with $r$.}


\begin{observation}
	If the importance of all criteria is the same ($w_k = \frac{1}{K}$ for all $k$) and $r = \frac{n}{K}$ with $n \in \{1,\dots,K\}$, then the $h^\beta_r(x)$ is the average of the $n$ worst $\beta$-averages. Recall that this is called $n$-centra \citep{NickelPuerto05}.
\end{observation}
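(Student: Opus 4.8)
The plan is to evaluate the $r$-OWA directly from Definition~\ref{def:r-OWA}, since under the stated hypotheses every ingredient simplifies explicitly. First I would order the $\beta$-averages from largest to smallest, writing $g_{(1)} \ge g_{(2)} \ge \dots \ge g_{(K)}$ for the sorted values of $g_1^\beta(x),\dots,g_K^\beta(x)$. Because all importances coincide, $w_k = 1/K$, the reordering leaves them unchanged, so the accumulated importances are simply $T_j = \sum_{k=1}^j w_{(k)} = j/K$.

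Next I would substitute $r = n/K$ into the weight generating function. The condition $T_j \ge r$ becomes $j/K \ge n/K$, i.e.\ $j \ge n$; hence
$$ f(T_j) = \begin{cases} \dfrac{j/K}{n/K} = \dfrac{j}{n} & \text{if } j < n,\\ 1 & \text{if } j \ge n. \end{cases} $$
Then I would form the OWA weights by the telescoping differences $\lambda_j = f(T_j) - f(T_{j-1})$ (with $f(T_0) = f(0) = 0$). For $1 \le j \le n-1$ both arguments lie below the threshold and the difference is $j/n - (j-1)/n = 1/n$; at the transition index $j = n$ one has $f(T_n) - f(T_{n-1}) = 1 - (n-1)/n = 1/n$ as well; and for $j > n$ both terms equal $1$, so $\lambda_j = 0$. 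Thus $\lambda_j = 1/n$ for $j = 1,\dots,n$ and $\lambda_j = 0$ otherwise.

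Finally, plugging these weights into the OWA gives
$$ h^\beta_r(x) = O_r\!\left(g_1^\beta(x),\dots,g_K^\beta(x)\right) = \sum_{j=1}^K \lambda_j\, g_{(j)} = \frac{1}{n}\sum_{j=1}^n g_{(j)}, $$
which is exactly the average of the $n$ worst $\beta$-averages, i.e.\ the $n$-centra.

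The argument is an elementary telescoping computation, so I do not anticipate a genuine obstacle. The only points that require care are the boundary indices: checking that the endpoint cases $n = 1$ (where $T_1 = r$ already forces $\lambda_1 = 1$) and $n = K$ (where $\lambda_j = 1/K$ for every $j$, recovering the plain average) behave correctly, and verifying that the weight at the transition index $j = n$ equals $1/n$ rather than a smaller fractional remainder. These checks confirm that the telescoping collapses cleanly to a uniform weight $1/n$ on the top $n$ values.
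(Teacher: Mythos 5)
Your computation is correct and is essentially the specialization to uniform importances of the weight-generating-function calculation the paper already carries out in Example~\ref{ex:weightgenfunct}; the paper states the observation without a separate proof precisely because that example covers it. Your boundary checks at $j=n$, $n=1$ and $n=K$ are the only delicate points and you handle them correctly.
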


\begin{definition}[Dominance] \label{def:dominance}
	Let $x$ and $y$ feasible solutions ($x,y\in X$) and $r,\beta \in (0,1]$. Then $x$ dominates $y$ ($x \succsim y$) if $h^\beta_r(x) \le h^\beta_r(y)$, where $h^\beta_r(x)$ is the $r$-OWA of the $\beta$-averages.
\end{definition}

Definition~\ref{def:dominance} induces a domination relationship with the following properties:
\begin{description}
	\item[Reflexivity] Given $x$, $h^\beta_r(x) \ge h^\beta_r(x)$, and then $x \succsim x$, so $\succsim$ is reflexive.
	\item[Transitiveness] Given $x \succsim y$, $y \succsim z$, we have $h^\beta_r(x)\ge h^\beta_r(y)$ y $h^\beta_r(y)\ge h^\beta_r(z)$, and then $h^\beta_r(x) \ge h^\beta_r(z)$, which leads to $x \succsim z$, and we conclude that $\succsim$ is transitive.
	\item[Antisymmetry] Given $x \succsim y$, $y \succsim x$, we have $h^\beta_r(x)\ge h^\beta_r(y)$ and $h^\beta_r(y)\ge h^\beta_r(x)$, but from $h^\beta_r(x) = h^\beta_r(y)$ it cannot be guaranteed that $x=y$, and hence $\succsim$ is not antisymmetric.
\end{description}

\subsection{Idea of solution and dominance properties}
Consider the multiobjective stochastic programming problem:
\begin{equation*}
	\min_{x \in X} \left( f_1(x,\omega), \dots, f_K (x,\omega)\right) 
\end{equation*}

The previously defined concepts of $\beta$-average and $r$-OWA transform the $MSP$ problem into a deterministic multiple objective problem, and then into a deterministic single objective problem.

\vspace{1em}
\begin{minipage}{0.45\textwidth}
	$$ MSP \rightarrow MOP \rightarrow LP (MIP)$$
	$$ f_k^j(x) \xrightarrow{\beta\text{-average}} g_k^\beta(x) \xrightarrow{r\text{-OWA}} h^\beta_r(x)$$
\end{minipage}%
\begin{minipage}{0.50\textwidth}
	\begin{enumerate}
		\item For every $x \in X$ there is a function $f_k^j$ to be minimized which depends on the scenario $j$ and the criterion $k$.
		\item The problem is transformed into a deterministic one with multiple objectives (MOP) using the $\beta$-average concept.
		\item Computing the $r$-OWA, each $x \in X$ is assigned a scalar. The problem consists of finding the $x$ which minimizes this $h^\beta_r(x)$.
	\end{enumerate}
\end{minipage}
\vspace{1em}

The solution procedure lies into what is usually called a \emph{scalarization approach}. When obtaining a minimizer of $h^\beta_r(x)$ it is also desired that the optimal solution is efficient for the associated MOP problem:

\begin{equation}
	\min_{x \in X} \left(g^\beta_1(x),\dots,g^\beta_K(x)\right) \label{eq:MPproblem} \tag{MOP}
\end{equation}

\begin{proposition}
	Given $x^*$ minimum of $h^\beta_r(x)$ the following statements hold:
	\begin{enumerate}
		\item $x^*$ is not necessarily efficient of the MOP problem.
		\item $x^*$ is weakly efficient of the MOP problem.
		\item If $x^*$ is the only minimum of $h^\beta_r(x)$, then $x^*$ is efficient.
		\item Given $x^*$ not efficient, an alternative $y^*$ can be found on a second phase such that $y^*$ is efficient and $h^\beta_r(x^*) = h^\beta_r(y^*)$.
	\end{enumerate}
\end{proposition}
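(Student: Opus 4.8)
The unifying tool is the max-of-linear representation of the $r$-OWA recorded in Observation~\ref{obs:maximumlambdas}: it writes $O_r$, and therefore $h^\beta_r$, as a pointwise maximum of linear forms $\tfrac1r\sum_k\tilde\lambda_k g_k^\beta(x)$ whose coefficients satisfy $\tilde\lambda_k\ge 0$ and $\sum_k\tilde\lambda_k=r$. Two consequences of this shape will be used repeatedly: $h^\beta_r$ is nondecreasing in each $\beta$-average $g_k^\beta$, and when $r<1$ some coefficients may legitimately vanish, so improvements in the ``better'' criteria need not lower $h^\beta_r$. Claim~1 is settled by a counterexample exploiting exactly this flatness: take $K=2$, $w_1=w_2=\tfrac12$ and $r=\tfrac12$, for which the weight-generating computation of Definition~\ref{def:r-OWA} collapses the OWA to $h^\beta_r=\max\{g_1^\beta,g_2^\beta\}$. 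Two feasible points whose $\beta$-average vectors are $(5,3)$ and $(5,2)$ then attain the same optimal value while the second strictly dominates the first, so a minimizer of $h^\beta_r$ need not be efficient for~\eqref{eq:MPproblem}.

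For claim~2 I would argue by contradiction: assume some $x\in X$ has $g_k^\beta(x)<g_k^\beta(x^*)$ for every $k$, let $\hat\lambda$ be the coefficients attaining the maximum in Observation~\ref{obs:maximumlambdas} at $x$, and compare
\[
h^\beta_r(x)=\frac1r\sum_k\hat\lambda_k\,g_k^\beta(x)<\frac1r\sum_k\hat\lambda_k\,g_k^\beta(x^*)\le h^\beta_r(x^*).
\]
The strict middle inequality holds because $\sum_k\hat\lambda_k=r>0$ forces at least one positive coefficient, and the final inequality holds because $\hat\lambda$ is a feasible (if not optimal) choice for $x^*$; together they contradict minimality of $x^*$. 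Claim~3 uses only monotonicity: if $x^*$ were not efficient there would be $y\in X$ with $g_k^\beta(y)\le g_k^\beta(x^*)$ for all $k$ and strict for some $i$, whence $h^\beta_r(y)\le h^\beta_r(x^*)$; minimality gives equality, so $y$ is also a minimizer, and uniqueness forces $y=x^*$, contradicting $g_i^\beta(y)<g_i^\beta(x^*)$.

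Claim~4 is the two-phase construction and is where I expect the real work to lie, since it is the only part requiring a new optimization to be well posed. Writing $h^*=h^\beta_r(x^*)$ for the optimal value and $M=\{x\in X:h^\beta_r(x)=h^*\}$ for the set of first-phase optima, I would solve the second-phase problem of minimizing the strictly monotone surrogate $\sum_k g_k^\beta(x)$ over $M$ and call a minimizer $y^*$. By construction $h^\beta_r(y^*)=h^*=h^\beta_r(x^*)$. To see $y^*$ is efficient, suppose $z\in X$ dominated it; monotonicity of $O_r$ gives $h^\beta_r(z)\le h^*$, hence $h^\beta_r(z)=h^*$ and $z\in M$, and then $\sum_k g_k^\beta(z)<\sum_k g_k^\beta(y^*)$ would contradict optimality of $y^*$ in the second phase. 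The two delicate points are securing attainment of the second-phase minimum (via closedness of $M$ and compactness of $X$, or whatever regularity the concrete model supplies) and noticing that minimizing the surrogate over $M$ alone, rather than over all of $X$, already suffices, precisely because any dominator is forced back into $M$ by monotonicity.
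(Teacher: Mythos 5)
Your proposal is correct, but it does considerably more than the paper, which disposes of claims 2--4 with a one-line appeal to the standard scalarization literature (``These properties are known when using scalarization techniques'' \citep{Ehrgott2005}) and only works out claim 1, via a concrete two-alternative, two-scenario, three-criterion counterexample whose $\beta$-average vectors $(0.8,0.4,0.65)$ and $(0.8,0.45,0.65)$ share the $r$-OWA value $0.725$. Your counterexample for claim 1 is the same idea in minimal form (a flat direction of the OWA when $r<1$ lets two vectors, one dominating the other, tie), and your proofs of claims 2--4 are exactly the arguments that the citation is standing in for: monotonicity plus the existence of a strictly positive coefficient gives weak efficiency, monotonicity plus uniqueness gives efficiency, and a lexicographic second phase over the optimal level set gives the efficient alternative. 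What your route buys is self-containedness specialized to $h^\beta_r$; what it costs is that everything leans on the max-of-linear-forms representation in Remark~\ref{obs:maximumlambdas}, which the paper itself flags as not explicitly proven, so your argument inherits that gap, and claim 4 additionally needs the attainment hypothesis you correctly identify (closedness of the optimal set and enough compactness or linearity of the underlying model for the second-phase minimum to be achieved). Two small points worth tightening: in your claim 1 example you should exhibit feasible points and functions realizing the $\beta$-average vectors $(5,3)$ and $(5,2)$ (a single deterministic scenario suffices), and in stating the representation you should retain the constraint $\tilde\lambda_k\le w_k$, even though omitting it does not damage the inequalities you actually use.
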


These properties are known when using scalarization techniques \citep{Ehrgott2005}. Hence only an example of the first statement will be shown.

\begin{example}[$x^*$ is not necessarily efficient]
	Consider the example displayed on Table~\ref{tab:dominatedexample}, in which there are only two feasible solutions, two equiprobable scenarios ($\pi_1 = \pi_2 = \tfrac12$), three equally important criteria ($w_1 = w_2 = w_3 = \tfrac13$), and consider the values of $\beta = \tfrac12$ and $r = \tfrac23$ are taken.

	\begin{table}[ht!]
		\caption{Values of two alternatives for each scenario $j$ and criterion $k$, together with their $\beta$-averages ($\beta = \tfrac12$) and $r$-OWAs ($r = \tfrac23$)}
		\label{tab:dominatedexample}
		\begin{minipage}{.5\linewidth}
			\begin{center}
				\caption{Alternative 1}
					\begin{tabular}{r|ccc}
						& $k_1$ & $k_2$ & $k_3$ \\ \hline
						$j_1$ &  0.80  &  0.40  &  0.30  \\
						$j_2$ &  0.60  &  0.20  &  0.65  \\ \hline
						$\beta$-average & 0.80 & 0.40 & 0.65 \\ \hline
						$r$-OWA & \multicolumn{3}{c}{0.725} \\
					\end{tabular}
			\end{center}
		\end{minipage}%
		\begin{minipage}{.5\linewidth}
			\begin{center}
				\caption{Alternative 2}
					\begin{tabular}{r|ccc}
						& $k_1$ & $k_2$ & $k_3$ \\ \hline
						$j_1$ &  0.70  &  0.45  &  0.65  \\
						$j_2$ &  0.80  &  0.30  &  0.50  \\ \hline
						$\beta$-average & 0.80 & 0.45 & 0.65 \\ \hline
						$r$-OWA & \multicolumn{3}{c}{0.725} \\
					\end{tabular}
			\end{center}
		\end{minipage}
	\end{table}

	The $\beta$-averages are $(0.8,0.4,0.65)$ for the first alternative and $(0.8,0.45,0.65)$ for the second alternative. When computing the function $h^\beta_r$, both alternatives have an objective value of $0.725$. Consequently, even though the second alternative is an optimal solution of $h^\beta_r$, it is not an efficient solution of the MOP problem as its $\beta$-averages are dominated by those of the first alternative.
\end{example}

\subsection{An illustrative example} \label{sec:firstexample}
The solution concept proposed will be now applied, first with a discrete (and small) case. When the solution space is discrete, and all feasible solutions can be explicitly enumerated, the steps are as follows:

\begin{description}
	\item[Step 0] Normalize all objective functions $f^j_k(x)$.
	\item[Step 1] Set values for $\beta,r \in (0,1]$.
	\item[Step 2] For every $x\in X$ and every criterion define $g_k^\beta(x)$ as:
	$$g_k^\beta(x) = \parbox{8cm}{\emph{average of worst scenarios for criterion $k$\\ with probabilities adding up to $\beta$}}$$
	\item[Step 3] Define $h_r^\beta(x)$ as:
	$$h_r^\beta(x) = \parbox{8cm}{\emph{average of worst $g_k^\beta(x)$ values\\with importances adding up to $r$}}$$
	\item[Step 4] Search for $x \in X$ minimizing $h_r^\beta(x)$
\end{description}

Assume a decision space with only four alternatives, evaluated under five different scenarios with six criteria. For each of those alternatives it can be computed the value of the functions $f_k^j(x)$ to be minimized. Table~\ref{tab:ejemplo1:tabla1} shows the values of $f$, evaluated on feasible point $x_1$, by each of the scenarios and criteria considered.

\begin{table}[ht!]
	\centering
	\caption{Values of alternative 1 by scenario ($j$) and criteria ($k$)}
	\label{tab:ejemplo1:tabla1}
	\resizebox{\textwidth}{!}{
	\begin{tabular}{rrr|cccccc|}
		& & & \multicolumn{6}{c|}{criteria} \\
		& & & $w_1 = 0.20$ & $w_2 = 0.10$ & $w_3 = 0.20$ & $w_4 = 0.25$ & $w_5 = 0.15$ & $w_6 = 0.10$ \\
		& & & $k_1$ & $k_2$ & $k_3$ & $k_4$ & $k_5$ & $k_6$ \\ \hline
		\parbox[t]{4mm}{\multirow{5}{*}{\rotatebox[origin=c]{90}{scenarios}}}
		& $\pi_1 = 0.15$ & $j_1$ &  0.51  &  0.27  &  0.39  &  0.45  &  0.75  &  0.76  \\
		& $\pi_2 = 0.20$ & $j_2$ &  0.58  &  0.65  &  0.47  &  0.26  &  0.90  &  0.24  \\
		& $\pi_3 = 0.30$ & $j_3$ &  0.48  &  0.44  &  0.90  &  0.50  &  0.93  &  0.65  \\
		& $\pi_4 = 0.25$ & $j_4$ &  0.76  &  0.18  &  0.01  &  0.90  &  0.56  &  0.02  \\
		& $\pi_5 = 0.10$ & $j_5$ &  0.86  &  0.36  &  0.21  &  0.28  &  0.63  &  0.72  \\ \hline 
	\end{tabular}}
\end{table}

The first step consists on calculating the $\beta$-averages. Let assume a value of $\beta = 0.3$:
\begin{enumerate}
	\item For the first criterion the worst scenario is $j_5$, which has probability $0.1$. The second worst is $j_4$, with a probability of $0.25$. As the sum of those probabilities exceeds the $\beta$ fixed, for computing the $\beta$-average just a probability of $0.2$ is considered:
	$$g_1^\beta(x_1) = \frac{0.1 \cdot 0.86 + 0.2 \cdot 0.76}{0.3} = 0.793$$
	\item $g_2^\beta(x_1) = \left(0.2 \cdot 0.65 + 0.1 \cdot 0.44  \right)/0.3 = 0.580$
	\item $g_3^\beta(x_1) = \left(0.3 \cdot 0.90 \right)/0.3 = 0.900$
	\item $g_4^\beta(x_1) = 0.833$, $g_5^\beta(x_1) = 0.930$, $g_6^\beta(x_1) = 0.728$
\end{enumerate}

The last step is calculating the function $h_r^\beta(x)$, that is, the $r$-OWA of the $\beta$-averages. Table~\ref{tab:inst1alt1} calculates the $r$-OWA, and shows as well the information of the previously calculated $\beta$-averages, when the value of $r = 0.17$ is taken. 

\begin{table}[ht!]
	\centering
	\caption{Values of alternative 1 by scenario ($j$) and criteria ($k$)}
	\label{tab:inst1alt1}
	\resizebox{\textwidth}{!}{
	\begin{tabular}{rrr|cccccc|}
		& & & \multicolumn{6}{c|}{criteria} \\
		& & & $w_1 = 0.20$ & $w_2 = 0.10$ & $w_3 = 0.20$ & $w_4 = 0.25$ & $w_5 = 0.15$ & $w_6 = 0.10$ \\
		& & & $k_1$ & $k_2$ & $k_3$ & $k_4$ & $k_5$ & $k_6$ \\ \hline
		\parbox[t]{4mm}{\multirow{5}{*}{\rotatebox[origin=c]{90}{scenarios}}}
		& $\pi_1 = 0.15$ & $j_1$ &  0.51  &  0.27  &  0.39  &  0.45  &  0.75  &  0.76  \\
		& $\pi_2 = 0.20$ & $j_2$ &  0.58  &  0.65  &  0.47  &  0.26  &  0.90  &  0.24  \\
		& $\pi_3 = 0.30$ & $j_3$ &  0.48  &  0.44  &  0.90  &  0.50  &  0.93  &  0.65  \\
		& $\pi_4 = 0.25$ & $j_4$ &  0.76  &  0.18  &  0.01  &  0.90  &  0.56  &  0.02  \\
		& $\pi_5 = 0.10$ & $j_5$ &  0.86  &  0.36  &  0.21  &  0.28  &  0.63  &  0.72  \\ \hline 
		\multicolumn{3}{c|}{$\beta$-average, $\beta = 0.30$} 
		            & 0.793 & 0.580 & 0.900 & 0.833 & 0.930 & 0.728 \\ \hline
		\multicolumn{3}{c|}{$r$-OWA, $r = 0.17$} & \multicolumn{6}{c|}{0.927} \\ \hline
	\end{tabular}}
\end{table}

\paragraph{Results} The values of the functions for the other alternatives, as well as its $\beta$-averages and $r$-OWAs are shown in Tables~\ref{tab:inst1alt2}, \ref{tab:inst1alt3} and \ref{tab:inst1alt4}, starting on Page~\pageref{tab:inst1alt2}. A summary of the results can be seen in Table~\ref{tab:inst1allvalues}, where all the $\beta$-averages and $r$-OWAs are shown, determining that the optimal alternative for the values of $\beta$ and $r$ given is Alternative 1.

\begin{table}[ht!]
	\caption{$\beta$-averages and $r$-OWAs for each of the 4 feasible alternatives of the example}
	\label{tab:inst1allvalues}
	\centering
	\begin{tabular}{r|cccccc|c}
		& \multicolumn{6}{c|}{$\beta$-averages} & $r$-OWA \\
		& $g_1^\beta(x)$ & $g_2^\beta(x)$ & $g_3^\beta(x)$ & $g_4^\beta(x)$ & $g_5^\beta(x)$ & $g_6^\beta(x)$ & $h_r^\beta(x)$ \\ \hline
		Alternative 1 & 0.793 & 0.580 & 0.900 & 0.833 & 0.930 & 0.728 & 0.927  \\
		Alternative 2 & 0.930 & 0.832 & 0.703 & 0.820 & 0.660 & 0.770 & 0.930  \\
		Alternative 3 & 0.765 & 0.775 & 0.468 & 0.643 & 0.950 & 0.883 & 0.943  \\
		Alternative 4 & 0.993 & 0.760 & 0.473 & 0.773 & 0.820 & 0.990 & 0.993  \\
	\end{tabular}
\end{table}

Variations on $\beta$ and $r$ yield very different results. Figure~\ref{fig:clusters} shows which of the four alternatives has the lowest $h$ value, depending on the values of $\beta$ and $r$. 

Figure~\ref{fig:hvalues} shows the optimal objective value when varying the parameters $\beta$ and $r$. It can be appreciated how $h$ decreases when $\beta$ and $r$ increase. This is due to the fact that the original $f_k^j$ functions are to be minimized, and the larger the parameters $\beta$ and $r$ are, more favourable scenarios/criteria will take part on the computation of $h_r^\beta(x)$, hence decreasing its optimal value.

\begin{figure}[ht!]
	\centering
	\begin{subfigure}{.5\textwidth}
		\centering
		\includegraphics[width=\linewidth]{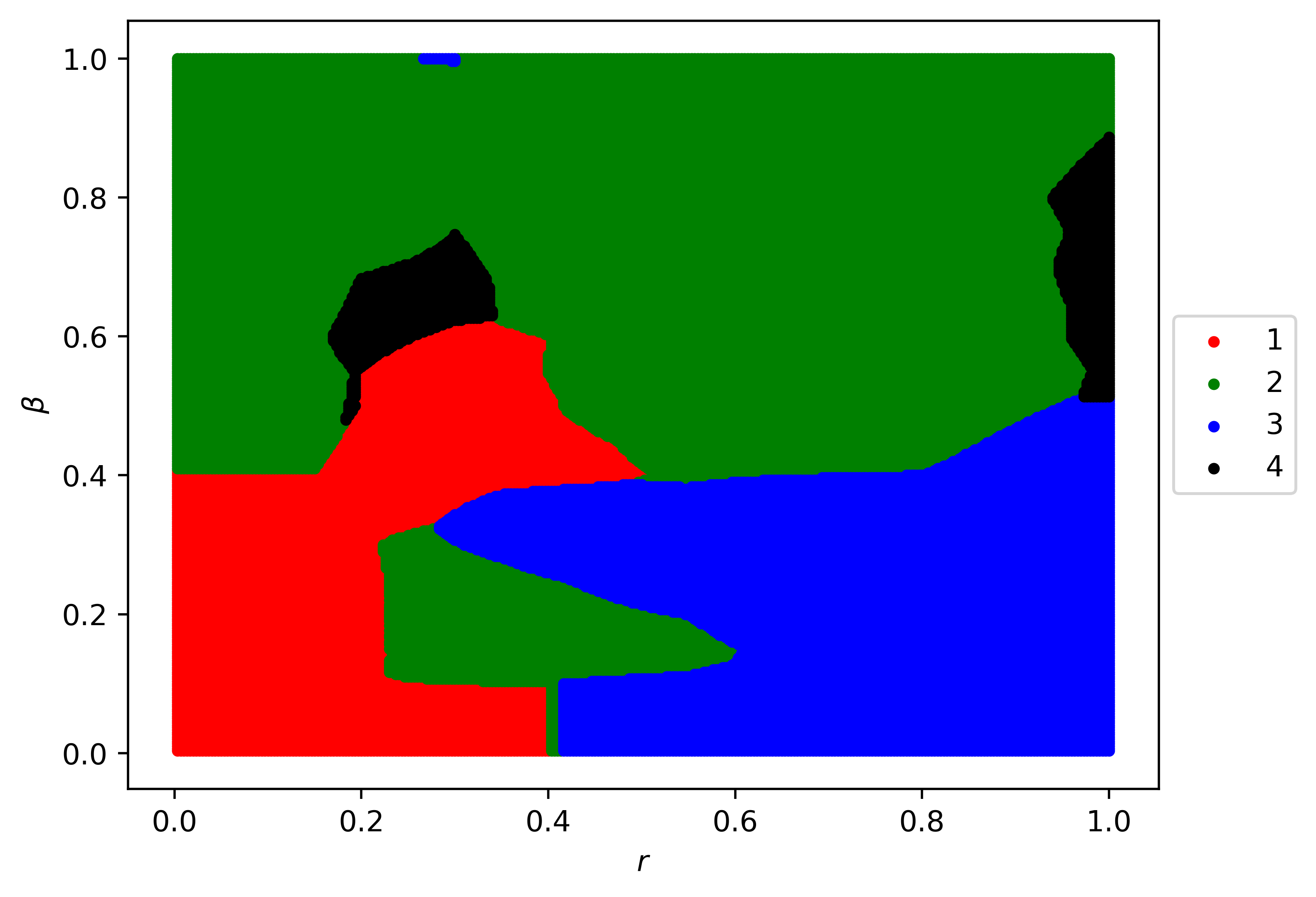}
		\caption{Optimal alternative for some values of $r$ and $\beta$}
		\label{fig:clusters}
	\end{subfigure}%
	\begin{subfigure}{.5\textwidth}
		\centering
		\includegraphics[width=0.9\linewidth]{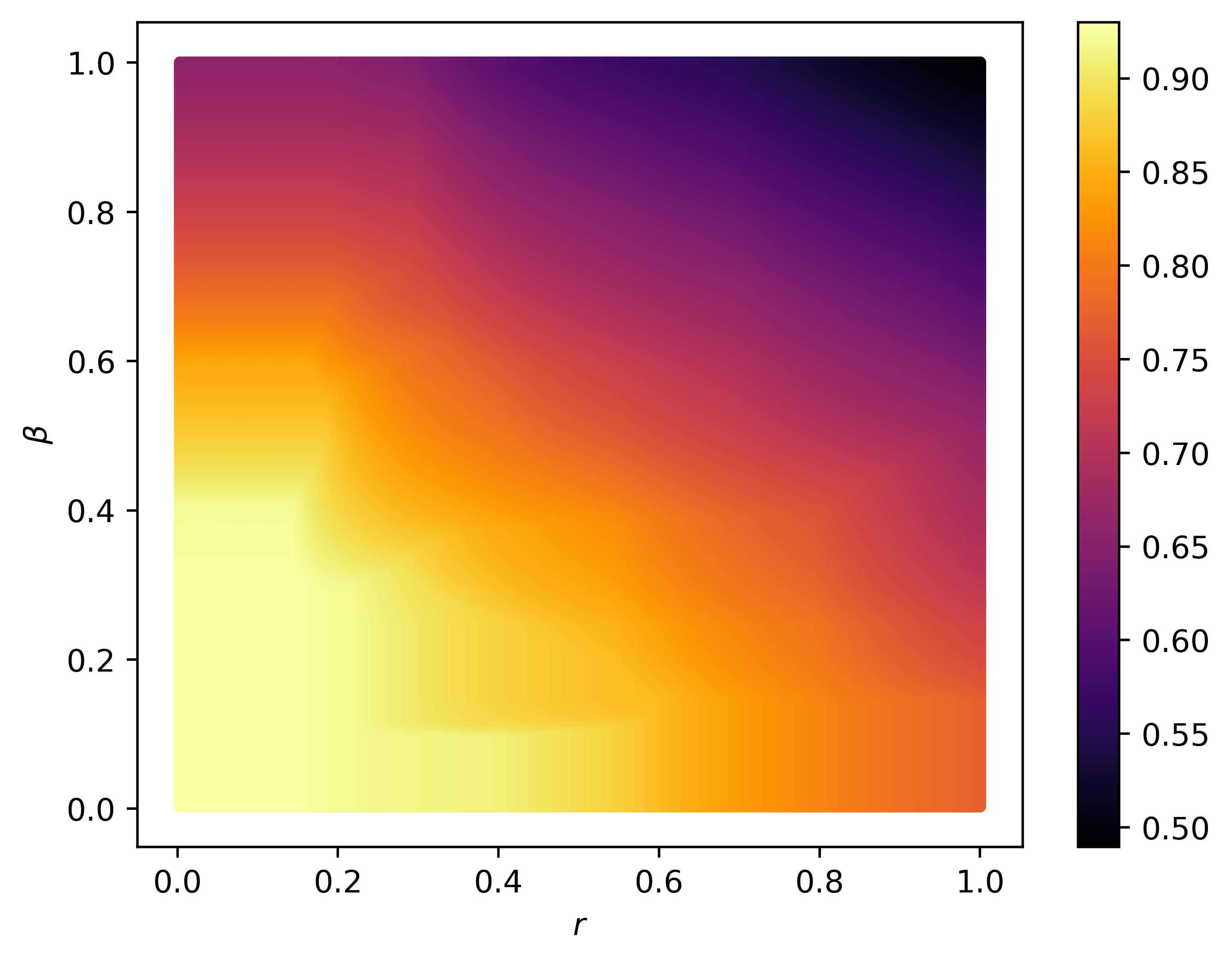}
		\caption{Optimal values of function $h_r^\beta(x)$ for some values of $r$ and $\beta$}
		\label{fig:hvalues}
	\end{subfigure}
	\caption{Results from illustrative example}
	\label{fig:illexample}
\end{figure}
\section{Computing the minimum: continuous case}\label{sec:MPmodel}
A concept of solution was proposed with Definition~\ref{def:dominance}. When the functions $f_k^j(x)$ to be minimized are given, a new function $h_r^\beta(x)$ to be minimized is defined, with parameters $\beta$ and $r$ such that $h_r^\beta(x)$ is the $r$-OWA of the $\beta$-averages. If the decision space is sufficiently small, the procedure shown in the above example obtains such a solution.

In this section, a mathematical programming model will be developed to obtain the minimum of $h_r^\beta(x)$ which allows one to obtain the proposed solution for bigger decision spaces, including continuous ones.


\subsection{Mathematical programming model}
Given $k$ and $x \in X$ we have the vector $\left(f_k^1(x),\dots,f_k^J(x)\right)$. Let $\left(f_k^{(1)}(x),\dots,f_k^{(J)}(x)\right)$ be the ordered vector such that $f_k^{(j_1)}(x) \ge f_k^{(j_2)}(x)$ when $j_1 \le j_2$.

Given $\beta \in (0,1]$, let $\jhat$ be the ordered scenario such that:
$$ \sum_{j = 1}^{\jhat} \pi_{(j)} \ge \beta, \qquad \sum_{j = 1}^{\jhat -1} \pi_{(j)} < \beta$$

Alternatively:

$$ f_k^{(1)}(x) \ge f_k^{(2)}(x) \ge \dots \ge f_k^{(\jhat)}(x) \ge f_k^{(\jhat+1)}(x) \ge \dots \ge f_k^{(J)}(x)$$
$$ 1 = \underbrace{\underbrace{\pi_{(1)} + \pi_{(2)} + \dots + \pi_{(\jhat-1)}}_{< \beta} + \pi_{(\jhat)}}_{\ge \beta} + \dots + \pi_{(J)}$$

Also let:
$$ \hat{\pi}_j = \begin{cases}
	\pi_j                                      & j \in \{(1),\dots,(\jhat-1) \} \\
	\beta - \sum_{j=(1)}^{j=(\jhat -1)} \pi_j & j = \jhat \\
	0                                               & \text{otherwise}
\end{cases}$$
The definition of $\hat{\pi}_{\jhat}$ is made in such a way that $\sum_j \hat{\pi}_j = \beta$. In this way, the average of the $\beta$ worst values can be computed as $\frac{1}{\beta}\sum_{j=1}^J \hat{\pi}_j f_k^{(j)} (x)$, which coincides with the definition of $\beta$-average (Definition~\ref{definicion_gbeta}). This computation can be written as the following optimization problem:
\begin{align*}
	\max_{\tilde{u}_j}\quad   & \frac1\beta\sum_{j = 1}^J \tilde{u}_j \cdot f_k^j (x) \\
	\text{s.t.} \quad & \sum_{j=1}^J \tilde{u}_j = \beta           \\
                     & 0 \le \tilde{u}_j \le \pi_j \qquad j = 1,\dots,J 
\end{align*}

A more natural approach would be to consider $u_j = \frac{\tilde{u}_j}{\beta}$. These $u_j$ represent the proportion in which scenario $j$ plays a part on the aggregated $\beta$-average. Introducing that change, the model is:
\begin{align*}
	\max_{u_j}\quad   & \sum_{j = 1}^J u_j \cdot f_k^j (x) \\
	\text{s.t.} \quad & \sum_{j=1}^J u_j = 1           \\
                     & 0 \le u_j \le \frac{\pi_j}{\beta} \qquad j = 1,\dots,J 
\end{align*}

The dual formulation is:
\begin{align}\label{eq:defg}
	\begin{split}
		\min_{z,y_j}\quad   & z + \sum_{j=1}^J \frac{\pi_j}{\beta} y_j \\
		\text{s.t.} \quad   & z + y_j \ge f_k^j(x) \qquad j = 1,\dots,J           \\
   	                    & z \text{ free}, y_j \ge 0 
	\end{split}
\end{align}

And hence finding the $x \in X$ which minimizes the average of the worst $\beta$ scenarios for a given $k$ is:
\begin{equation*}
	\begin{array}[t]{c}
		\min\limits_{x \in X}
	\end{array}
	\begin{array}[t]({rl})
		\max\limits_{\tilde{u}_j}  & \frac1\beta\sum_{j = 1}^J \tilde{u}_j f_k^j (x) \\
		\text{s.t.} & \sum_{j=1}^J \tilde{u}_j = \beta  \\
		& 0 \le \tilde{u}_j \le \pi_j \qquad j=1,\dots,J \\
	\end{array}
\end{equation*}

Or alternatively:
\begin{equation} 
\begin{split}\label{mod:minmin}
	\begin{array}[t]{c}
		\min\limits_{x \in X}
	\end{array}
	\begin{array}[t]({rl})
		\min\limits_{z,y_j}  & z + \sum_{j=1}^J \frac{\pi_j}{\beta} y_j \\
		\text{s.t.} & z + y_j \ge f_k^j(x) \qquad j = 1,\dots,J  \\
		& z \text{ free}, y_j \ge 0 \qquad j = 1,\dots,J\\
	\end{array}
\end{split}
\end{equation}

Which is equivalent to:
\begin{subequations}\label{modelo_kfijo}
\begin{alignat}{3}
	\min_{z,y_j,x}\quad   & z + \sum_{j=1}^J \frac{\pi_j}{\beta} y_j & \label{kfijofobj}\\
	\text{s.t.} \quad   & z + y_j \ge f_k^j(x) \quad & j = 1,\dots,J \label{kfijor2}\\
                       & z \text{ free}, y_j \ge 0 \quad & j = 1,\dots,J \nonumber\\
	                    & x \in X \nonumber &
\end{alignat}
\end{subequations}
\begin{observation}
	Models \eqref{mod:minmin} and \eqref{modelo_kfijo} are equivalent, as for any $x \in X$ chosen in \eqref{modelo_kfijo} the values $z$ and $y_j$ will get as small as allowed by constraint~\eqref{kfijor2}, as this improves the objective function~\eqref{kfijofobj}. Consequently for every $x$, its $\beta$-average will be computed appropriately, and thus \eqref{modelo_kfijo} obtains the $x\in X$ with smallest $\beta$-average, as desired on~\eqref{mod:minmin}. 
\end{observation}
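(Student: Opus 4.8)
The plan is to recognize the asserted equivalence as an instance of the elementary \emph{partial minimization} identity: minimizing a function jointly over all of its variables gives the same optimal value as minimizing it in stages, first over one block of variables and then over the other. Concretely, I would introduce the common objective $F(x,z,y) = z + \sum_{j=1}^J \tfrac{\pi_j}{\beta} y_j$ and the common feasible set $S = \{(x,z,y) : x \in X,\ z \in \mathbb{R},\ y_j \ge 0,\ z + y_j \ge f_k^j(x) \text{ for all } j\}$. With this notation \eqref{modelo_kfijo} is exactly $\min_{(x,z,y)\in S} F(x,z,y)$, whereas \eqref{mod:minmin} is the nested minimization $\min_{x\in X}\phi(x)$, where $\phi(x)$ denotes the optimal value of the inner linear program \eqref{eq:defg} for the fixed point $x$. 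Writing $V_{\mathrm J}$ and $V_{\mathrm N}$ for the optimal values of \eqref{modelo_kfijo} and \eqref{mod:minmin}, the goal is to show $V_{\mathrm J} = V_{\mathrm N}$.

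The core of the argument is a pair of inequalities. To obtain $V_{\mathrm N} \le V_{\mathrm J}$, I would take any joint optimizer $(x^\star,z^\star,y^\star)\in S$ of \eqref{modelo_kfijo}; since $(z^\star,y^\star)$ is feasible for the inner program at the fixed $x^\star$, we have $\phi(x^\star) \le F(x^\star,z^\star,y^\star) = V_{\mathrm J}$, and minimizing over $x$ on the left gives $V_{\mathrm N} \le V_{\mathrm J}$. For the reverse direction I would pick $\bar x$ attaining $\min_x \phi(x)$ together with an inner optimizer $(\bar z,\bar y)$ realizing $\phi(\bar x)$; as $(\bar x,\bar z,\bar y)\in S$, feasibility in the joint model yields $V_{\mathrm J} \le F(\bar x,\bar z,\bar y) = \phi(\bar x) = V_{\mathrm N}$. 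The two inequalities together give equality. This is exactly the mechanism the remark states informally: for each fixed $x$ the variables $z$ and $y_j$ are pushed down to the smallest values permitted by constraint~\eqref{kfijor2}, so the inner optimization is performed automatically inside the joint model.

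The one point that deserves care — and the only genuine obstacle — is to confirm that $\phi(x)$ is a finite real number for every $x\in X$, so that the nested minimum is well defined and the optimizers invoked above actually exist. I would settle this by invoking the same linear-programming duality used to derive \eqref{eq:defg}: the primal $\beta$-average program has feasible region $\{u : \sum_j u_j = 1,\ 0 \le u_j \le \pi_j/\beta\}$, which is nonempty (because $\sum_j \pi_j/\beta = 1/\beta \ge 1$) and compact, so its maximum is finite and, by strong duality, equals the dual value $\phi(x)$. Thus $\phi(x) = g_k^\beta(x)$ in the sense of Definition~\ref{def:betaaverage}, the nested problem is precisely $\min_{x\in X} g_k^\beta(x)$, and \eqref{modelo_kfijo} does return the feasible point with smallest $\beta$-average, as claimed. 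If one wishes to dispense with any attainment hypothesis on the outer minimization over $X$, I would simply restate both values as infima and use that the identity $\inf_{(x,z,y)\in S} F = \inf_{x}\inf_{(z,y)} F$ holds verbatim for infima, so the two-inequality argument goes through without requiring $\bar x$ to be attained.
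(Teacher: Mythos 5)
Your proof is correct and follows essentially the same route as the paper, which justifies the observation only by the informal remark that for each fixed $x$ the variables $z$ and $y_j$ are driven down to the values forced by constraint~\eqref{kfijor2}; your partial-minimization identity and two-inequality argument are simply the rigorous form of that statement. The additional care you take in verifying that the inner value $\phi(x)$ is finite (via the nonempty compact primal feasible set and LP duality) is a worthwhile tightening but does not change the underlying approach.
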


For every $k \in \{1,\dots,K\}$ thanks to the problem~\eqref{eq:defg} the function $g^\beta_k(x)$ can be defined, which measures for each $x\in X$ the $\beta$-average for that criterion, being:
\begin{equation}\label{definicion_gbeta}
\begin{alignedat}{2}
	g^\beta_k(x) \equiv \min_{z_k,y_{kj}}   & z_k + \sum_{j=1}^J \frac{\pi_j}{\beta} y_{kj} & & \\
	\text{s.t.} \; & z_k + y_{kj} \ge f_k^j(x) \quad & & j = 1,\dots,J     \\
               & z_k \text{ free}, y_{kj} \ge 0 \quad & & j = 1,\dots,J 
\end{alignedat}
\end{equation}

The already known approach for single criterion problems ends here. Given that, the next step is finding a ``good'' solution for all $k$. That is:
\[\min_{x \in X} \left(g^\beta_1(x),\dots,g^\beta_K(x)\right)\]

Given $r \in (0,1]$ the $r$-OWA of the $\beta$-averages will be now computed (in accordance with the definition given in Section~\ref{sec:defs}). That is, the solution of the following problem is sought:
\begin{align*}
	\max_{\tilde{t}_k} \quad & \frac1r\sum_k\tilde{t}_k \cdot g^\beta_k(x) \\
	                      & \sum_k \tilde{t}_k = r         \\
	                      & 0 \le \tilde{t}_k \le w_k \qquad k = 1,\dots,K
\end{align*}

Or equivalently: 
\begin{align*}
	\max_{t_k} \quad & \sum_kt_k \cdot g^\beta_k(x) \\
	                      & \sum_k t_k = 1         \\
	                      & 0 \le t_k \le \frac{w_k}{r} \qquad k = 1,\dots,K
\end{align*}

Its dual formulation is:
\begin{alignat*}{3}
	\min_{z,v_k} \quad & z + \sum_k \frac{w_k}{r}v_k & \\
	\text{s.t.}  \quad & z + v_k \ge g^\beta_k(x) \quad & k = 1,\dots,K\\
	                   & z \text{ free}, v_k \ge 0 \quad & k = 1,\dots,K
\end{alignat*}

Replacing the value of $g_k^\beta(x)$ given in~\eqref{definicion_gbeta} the next model is obtained:

\begin{subequations}\label{mod:mininside}
\begin{align}
	\min_{z,v_k} \quad 
	                   & z + \sum_k \frac{w_k}{r}v_k \label{mod:mininside:fobj}\\
	 \text{s.t.} \quad & z + v_k \ge \begin{pmatrix*}[l]
						       \min\limits_{z_k,y_{kj}} & z_k + \sum_{j=1}^J \frac{\pi_j}{\beta} y_{kj} \\
						   	 \quad\text{s.t.} &  z_k + y_{kj} \ge f_k^j(x) \quad\forall j \\
						       & z_k \text{ free}, y_{kj} \ge 0 
	                    \end{pmatrix*} \forall k \label{mod:mininside:min} \\
	                    & z \text{ free}, v_k \ge 0 \qquad \forall k \label{mod:mininside:dom}
\end{align}
\end{subequations}

Model \eqref{mod:mininside} calculates for a given $x\in X$ the $r$-OWA of its $\beta$-averages, which coincides with the notion of the function $h(x)$ given in Section~\ref{sec:defs}. This problem is not explicit in that it contains nested optimization problems in the constraints. For that reason, we propose a single level alternative for $x \in X$ fixed.

Consider the following linear programming model:
\begin{subequations}\label{mod:final}
\begin{alignat}{3}
	\min_{z,v_k,z_k,y_{kj}} \quad & z + \sum_k \frac{w_k}{r}v_k \label{mod:final:fobj} & \\
	\text{s.t.} \quad  & z + v_k \ge z_k + \sum_{j=1}^J \frac{\pi_j}{\beta} y_{kj} \quad & \forall k \label{mod:final:dualcriterios} \\
	   	             & z_k + y_{kj} \ge f_k^j(x) \quad & \forall k,j  \label{mod:final:dualescenarios}  \\
	                   & y_{kj} \ge 0 \quad & \forall k,j   \label{mod:final:dom1} \\
	                   & z_k \text{ free}, v_k \ge 0 \quad & \forall k  \label{mod:final:dom2} \\
	                   & z \text{ free}	 \label{mod:final:dom3} &
\end{alignat}
\end{subequations}



\begin{proposition}\label{prop:transformations}
	Transformation from model~\eqref{mod:mininside} to model~\eqref{mod:final} is valid, in that their optimal solution and objective values coincide. 
\end{proposition}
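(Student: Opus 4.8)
The plan is to recognise model~\eqref{mod:final} as the standard epigraph reformulation of model~\eqref{mod:mininside}: the nested minimisation on the right-hand side of constraint~\eqref{mod:mininside:min} is precisely $g_k^\beta(x)$ as defined in~\eqref{definicion_gbeta}, and replacing ``$z+v_k \ge \min\{\dots\}$'' by the flattened constraints~\eqref{mod:final:dualcriterios}--\eqref{mod:final:dualescenarios} leaves the feasible region unchanged once we project onto the variables $(z,v_k)$ that actually appear in the objective. Before anything else I would record that, for each fixed $x\in X$ and each $k$, the inner program defining $g_k^\beta(x)$ is feasible and bounded: its primal, the maximisation over $u_j$ with $0\le u_j\le \pi_j/\beta$ and $\sum_j u_j=1$, is a non-empty bounded polytope since $\sum_j \pi_j/\beta = 1/\beta \ge 1$, so by linear programming duality the minimum defining $g_k^\beta(x)$ is attained and finite. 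This guarantees the objects manipulated below are well defined.

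The core is a projection lemma: for a fixed pair $(z,v_k)$ there exist $z_k,y_{kj}$ satisfying~\eqref{mod:final:dualescenarios}, \eqref{mod:final:dom1}--\eqref{mod:final:dom2} together with~\eqref{mod:final:dualcriterios} \emph{if and only if} $z+v_k \ge g_k^\beta(x)$ for every $k$. I would prove the two directions separately. For the ``only if'' direction, take any feasible tuple of~\eqref{mod:final}; since its $(z_k,y_{kj})$ satisfy the inner constraints, the value $z_k+\sum_j \tfrac{\pi_j}{\beta}y_{kj}$ is at least its minimum $g_k^\beta(x)$, and chaining this with~\eqref{mod:final:dualcriterios} gives $z+v_k \ge g_k^\beta(x)$, so $(z,v_k)$ is feasible for~\eqref{mod:mininside}. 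For the ``if'' direction, given $(z,v_k)$ feasible for~\eqref{mod:mininside}, I would pick the inner minimiser $(z_k^\star,y_{kj}^\star)$ attaining $g_k^\beta(x)$; then~\eqref{mod:final:dualescenarios} holds by inner feasibility and~\eqref{mod:final:dualcriterios} holds because $z_k^\star+\sum_j \tfrac{\pi_j}{\beta}y_{kj}^\star = g_k^\beta(x)\le z+v_k$, exhibiting a feasible completion in~\eqref{mod:final}.

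Having matched the feasible projections, I would close by noting that the two models share the identical objective $z+\sum_k \tfrac{w_k}{r}v_k$, which depends only on $(z,v_k)$ and not on the auxiliary variables $z_k,y_{kj}$. Minimising over $z_k,y_{kj}$ in~\eqref{mod:final} is therefore purely a feasibility question, so the optimal values coincide; moreover the $(z,v_k)$-component of any optimiser of~\eqref{mod:final} optimises~\eqref{mod:mininside}, and conversely any optimiser of~\eqref{mod:mininside} lifts to an optimiser of~\eqref{mod:final} through the inner minimisers constructed above. The one point demanding care---the main, if modest, obstacle---is the ``if'' direction, where one must know the inner minimum is genuinely attained rather than merely approached; this is exactly what the feasibility and boundedness remark secures, and it is also what rules out pathologies that an unbounded or infeasible inner program would introduce.
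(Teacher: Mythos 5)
Your argument is correct, but it is organised differently from the paper's. The paper proves Proposition~\ref{prop:transformations} by contradiction on optimal solutions: it takes an optimum $(z^*,v_k^*,z_k^*,y_{kj}^*)$ of \eqref{mod:final}, supposes a strictly better feasible $(z',v_k')$ for \eqref{mod:mininside}, and argues that either $(z',v_k',z_k^*,y_{kj}^*)$ contradicts optimality of \eqref{mod:final} or some constraint \eqref{mod:mininside:min} is violated; the projection statement you take as your core lemma is postponed to a separate result (Proposition~\ref{prop:equivalencia}). You instead prove the projection equivalence first --- $(z,v_k)$ admits a feasible completion $(z_k,y_{kj})$ in \eqref{mod:final} if and only if $z+v_k\ge g_k^\beta(x)$ for all $k$ --- and read off the coincidence of optimal values because the objective depends only on $(z,v_k)$. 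Your route buys two things. First, it makes explicit the attainment of the inner minimum (via feasibility and boundedness of the primal polytope, $\sum_j \pi_j/\beta = 1/\beta \ge 1$), which both your ``if'' direction and the paper's argument silently rely on. Second, it avoids a weak step in the paper's contradiction: from $z'+v_{k_0}' < z_{k_0}^*+\sum_j \tfrac{\pi_j}{\beta}y_{k_0 j}^*$ one cannot directly conclude that \eqref{mod:mininside:min} is violated, since the right-hand side is only an \emph{upper bound} on the inner minimum; the contradiction goes through cleanly only if one substitutes the inner minimisers, which is exactly what your construction does. So your proof is not just a valid alternative but arguably tightens the published argument.
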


\begin{proof}
	Let $\left(z^*,v_k^*,z_k^*,y_{kj}^*\right)$ be the optimal solution of model~\eqref{mod:final}. $\left(z^*,v_k^*\right)$ is feasible of model~\eqref{mod:mininside}, and it will be shown that it is also optimal for such model.
	Assume it exists $\left(z',v_k'\right)$ feasible of model~\eqref{mod:mininside} with:
	$$ z' + \sum_k \frac{w_k}{r}v_k' < z^* + \sum_k \frac{w_k}{r}v_k^*$$
	This and constraint~\eqref{mod:final:dualcriterios} implies there exists $k_0$ such that:
	$$ z' + v_{k_0}' < z_{k_0}^* + \sum_{j=1}^J \frac{\pi_j}{\beta} y_{{k_0}j}^*$$
	otherwise $\left(z',v_k',z_k^*,y_{kj}^*\right)$ would be optimal of model~\eqref{mod:final}.
	Since $z_{k_0}^*$ and $y_{{k_0}j}^*$ are feasible of model~\eqref{mod:final} they are also feasible of the model on the RHS of constraint~\eqref{mod:mininside:min}, and thus $z'$ and $v_{k_0}'$ violate constraint~\eqref{mod:mininside:min}.
\end{proof}

Proposition~\ref{prop:transformations} showed that the optimal solutions of models~\eqref{mod:mininside} and \eqref{mod:final} coincide. Proposition~\ref{prop:equivalencia} goes further showing the connection between their feasible sets.

\begin{proposition}\label{prop:equivalencia}
	The feasible set of model~\eqref{mod:mininside} is a projection of the feasible set of model~\eqref{mod:final}.	
\end{proposition}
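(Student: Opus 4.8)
The plan is to pin down exactly what "projection" means here and then establish the set equality by a two-sided inclusion. Let $\mathcal{F}$ denote the feasible set of model~\eqref{mod:final}, living in the space of variables $(z, v_k, z_k, y_{kj})$, and let $\mathcal{M}$ denote the feasible set of model~\eqref{mod:mininside}, living in the space of variables $(z, v_k)$. Let $p$ be the coordinate projection that retains only the $(z, v_k)$ components and discards $(z_k, y_{kj})$. The statement to prove is then $p(\mathcal{F}) = \mathcal{M}$. I would first recall that a point $(z, v_k)$ lies in $\mathcal{M}$ precisely when $v_k \ge 0$ for all $k$ and $z + v_k \ge g_k^\beta(x)$ for all $k$, where $g_k^\beta(x)$ is the optimal value of the inner program on the right-hand side of~\eqref{mod:mininside:min}, equivalently~\eqref{definicion_gbeta}.

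For the inclusion $\mathcal{M} \subseteq p(\mathcal{F})$ I would start from an arbitrary $(z, v_k) \in \mathcal{M}$ and, for each $k$, choose an optimal solution $(z_k^*, y_{kj}^*)$ of the inner program, so that $z_k^* + \sum_{j} \frac{\pi_j}{\beta} y_{kj}^* = g_k^\beta(x)$ with $z_k^* + y_{kj}^* \ge f_k^j(x)$ and $y_{kj}^* \ge 0$. Constraints~\eqref{mod:final:dualescenarios} and~\eqref{mod:final:dom1} then hold by construction, and~\eqref{mod:final:dualcriterios} holds because $z + v_k \ge g_k^\beta(x) = z_k^* + \sum_{j} \frac{\pi_j}{\beta} y_{kj}^*$. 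Hence the extended point $(z, v_k, z_k^*, y_{kj}^*)$ belongs to $\mathcal{F}$ and projects back onto $(z, v_k)$.

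For the reverse inclusion $p(\mathcal{F}) \subseteq \mathcal{M}$ I would take any $(z, v_k, z_k, y_{kj}) \in \mathcal{F}$ and note that, for each fixed $k$, the pair $(z_k, y_{kj})$ satisfies exactly the constraints of the inner program, hence is feasible there; consequently its objective value is bounded below by the optimum, $z_k + \sum_{j} \frac{\pi_j}{\beta} y_{kj} \ge g_k^\beta(x)$. Chaining this bound with~\eqref{mod:final:dualcriterios} yields $z + v_k \ge g_k^\beta(x)$, which together with $v_k \ge 0$ is exactly the membership condition for $\mathcal{M}$, so $p(z, v_k, z_k, y_{kj}) = (z, v_k) \in \mathcal{M}$.

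The proof is short, and the only subtlety requiring care is the asymmetry between the two inclusions: the forward direction needs the inner minimum to be \emph{attained}, so that an optimal $(z_k^*, y_{kj}^*)$ exists, whereas the reverse direction only uses that any feasible inner point has objective value at least the minimum. Attainment is not a genuine obstacle here, since it follows from the linear-programming duality already invoked when~\eqref{eq:defg} was derived, together with the fact that the $\beta$-average is a finite quantity realized by the explicit choice of $\hat{\pi}_j$; I would simply cite that rather than re-prove it. Beyond this, the main thing to keep clean is the bookkeeping of which variables are projected away and verifying that every constraint of the larger model is accounted for in each direction.
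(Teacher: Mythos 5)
Your proof is correct and follows essentially the same route as the paper's: the forward inclusion extends $(z,v_k)$ by an optimal inner solution $(z_k^*,y_{kj}^*)$, and the reverse inclusion uses that any feasible inner point has objective value at least the inner minimum, chained with constraint~\eqref{mod:final:dualcriterios}. Your explicit remark on attainment of the inner minimum is a small point the paper leaves implicit, but the argument is otherwise the same.
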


\begin{proof}
	\mbox{}
	\begin{enumerate}
		\item For each feasible solution $(z,v_k)$ of model~\eqref{mod:mininside} there is at least one feasible solution of model~\eqref{mod:final} with same values $(z,v_k)$, being so the same objective function.

		Let $(z^1,v^1_k)$ a feasible solution of model~\eqref{mod:mininside}, and $(z^*_k,y^*_{kj})$ the optimal solution where the minimum of the right-hand side of equation \eqref{mod:mininside:min} is reached for each $k$. Since constraints \eqref{mod:final:dualcriterios}, \eqref{mod:final:dualescenarios}, \eqref{mod:final:dom1} and \eqref{mod:final:dom2} are satisfied in model~\eqref{mod:mininside}, $(z^1,v^1_k,z^*_k,y^*_{kj})$ is a feasible solution or model~\eqref{mod:final}.

		\item For each feasible solution $(z,v_k,z_k,y_{kj})$ of model~\eqref{mod:final}, $(z,v_k)$ is a feasible solution of model~\eqref{mod:mininside}, being so the same objective function.
		Let $(z^2,v^2_k,z^2_k,y^2_{kj})$ a feasible solution of model~\eqref{mod:final}. Since constraints \eqref{mod:final:dualcriterios}, \eqref{mod:final:dualescenarios} and \eqref{mod:final:dom1} are included in model~\eqref{mod:final}, $(z^2_k,y^2_{kj})$ is feasible for the model included in the RHS of constraint \eqref{mod:mininside:min} and therefore greater than or equal to the minimum of that model, verifying:
		$$z^2 + v^2_k \ge z^2_k + \sum_{j=1}^J \frac{\pi_j}{\beta}y^2_{kj} \ge \min \left\lbrace z_k + \sum_{j=1}^J \frac{\pi_j}{\beta}y_{kj} \right\rbrace$$
		and so, feasible for model~\eqref{mod:mininside}.
	\end{enumerate}
\end{proof}

Finally after proving the validity of model~\eqref{mod:final} it is possible to let $x \in X$ free, with the purpose of finding the one minimizing the function $h_r^\beta(x)$:
\begin{alignat*}{3}
	\min_{z,v_k,z_k,y_{kj},x} \quad & z + \sum_k \frac{w_k}{r}v_k  & \\
	\text{s.t.} \quad  & z + v_k \ge z_k + \sum_{j=1}^J \frac{\pi_j}{\beta} y_{kj} \quad & \forall k  \\
	   	             & z_k + y_{kj} \ge f_k^j(x) \quad & \forall k,j    \\
	                   & y_{kj} \ge 0 \quad & \forall k,j    \\
	                   & z_k \text{ free}, v_k \ge 0 \quad & \forall k   \\
	                   & z \text{ free}	  & \\
	                   & x \in X &
\end{alignat*}





\section{Knapsack problem}\label{sec:knap}
The multiobjective stochastic knapsack problem is used to illustrate the usefulness of the previously defined concept.
\begin{definition}[Multiobjective stochastic knapsack problem]
	Let $I$ be a collection of objects with weights $v_i$, which can be selected as members of a knapsack with maximum weight $V$. There is a set of scenarios $J$, each of them with probability $\pi_j$, and a set of criteria $K$, with importances $w_k$. For every pair of scenario-criterion, there is a benefit associated with selecting object $i$, denoted by $b^i_{jk}$. Which objects should be taken in order to maximize benefit?
\end{definition}

The above problem differs with the well-known knapsack problem in that there is stochasticity and multiple objectives to be maximized.

The following MSP model can be adapted to analyze the problem. Note that to preserve the sense of the optimization, rather than to maximize the benefits of the carried objects, it will be minimized the value of the objects not chosen.
\begin{equation}\label{model:knapMSP}
\begin{aligned} 
	\min_{x_i}   \quad& \left\lbrace f_k^j(\mathbf{x}):= \sum_i \left(1-x_i\right)b^i_{kj} \right\rbrace   \\
	\text{s.t.}\quad&  \sum_i v_i x_i \le V \quad & \forall i    \\
	                & x_i \in \{0,1\} \quad & \forall i 
\end{aligned}
\end{equation}

When using the methodology developed in the previous sections, problem~\eqref{model:knapMSP} is transformed into the following mixed-integer linear programming model:

\begin{equation}\tag{MSP}\label{model:MSP}
\begin{aligned}
	\min_{z,v_k,z_k,y_{kj},x_i} \quad & z + \sum_k \frac{w_k}{r}v_k  & \\
	\text{s.t.} \quad  & z + v_k \ge z_k + \sum_{j=1}^J \frac{\pi_j}{\beta} y_{kj} \quad & \forall k  \\
	& z_k + y_{kj} \ge \sum_i \left(1-x_i\right)b^i_{kj} \quad & \forall k,j    \\
	& \sum_i v_i x_i \le V \quad & \forall i    \\
	& y_{kj} \ge 0 \quad & \forall k,j    \\
	& x_i \in \{0,1\} \quad & \forall i    \\
	& z_k \text{ free}, v_k \ge 0 \quad & \forall k   \\
	& z \text{ free} &
\end{aligned}
\end{equation}

For given $r,\beta\in(0,1]$, model~\eqref{model:MSP} obtains the $\mathbf{x}^*$ minimizing the $r$-OWA of the $\beta$-averages. In order to illustrate the benefits of using model~\eqref{model:MSP}, a naive way of solving problem~\eqref{model:knapMSP} is considered:

\begin{equation*}\tag{MIP}\label{model:MIP}
\begin{aligned}
	\min_{x_i}   \quad& \sum_{k,j} w_k\pi_j \sum_i \left(1-x_i\right)b^i_{kj}  \\
	\text{s.t.}\quad&  \sum_i v_i x_i \le V \quad & \forall i    \\
	                & x_i \in \{0,1\} \quad & \forall i 
\end{aligned}
\end{equation*}

Hence model~\eqref{model:MIP} computes the average of the $f_k^j$, using the importances of the criteria and the probability of the scenarios. It is clear that for ``average'' criteria-scenarios $\xmip$, the optimal solution of model~\eqref{model:MIP}, outperforms $\xmsp$, the optimal solution of model~\eqref{model:MSP}. Conversely $\xmsp$ will improve $\xmip$ in unfavourable criteria-scenarios, as expected of a risk-averse solution.

\subsection{Computational experiments}
The following sections will show computational experiments, for different values of $r$ and $\beta$ and different number of objects, scenarios and criteria. Algorithm~\ref{alg:randominst} shows how the random instances are created, given a number of objects, scenarios and criteria.

\begin{algorithm}[ht!]
	\caption{Generating random data, with $\mathcal{U}(a,b)$ the uniform distribution in $[a,b]$}
	\label{alg:randominst}
	\begin{algorithmic}[1]
		\Function{randomInstance}{$|I|,|J|,|K|$}
		\State $p\gets \mathcal{U}(0.25,0.75)$\Comment{how many objects on average will fit in the knapsack}
		\State $W \gets \frac1p$\Comment{average weight of objects}
		\For{$i \in I$}
			\State $w_i\gets \mathcal{U}(0.5W,1.5W)$ \Comment{weight of each object}
			\For{$j,k \in J\times K$}
				\State $b^i_{kj}\gets \mathcal{U}(0,1)$ \Comment{value of each object}
			\EndFor
		\EndFor
		\EndFunction
	\end{algorithmic}
\end{algorithm}

For each of the solved instances it will be recorded:
\begin{itemize}
	\item $t_\text{MSP},t_\text{MIP}$: Solution time in seconds of models~\eqref{model:MSP} and \eqref{model:MIP}. With them the following value is calculated:
	$$ \Delta_\text{time} := \frac{t_\text{MSP}}{t_\text{MIP}}
		              \qquad \emph{(time penalty factor)}$$
	$\Delta_\text{time}$, the time penalty factor, indicates the increase of computing time when solving model~\eqref{model:MSP} rather than model~\eqref{model:MIP}.
	\item $\zmsp,\zmip$: Optimal values of the models. 
	\item $f_\text{MSP} \left(\xmip\right), f_\text{MIP} \left(\xmsp\right)$: Objective value of $\xmip$ in model~\eqref{model:MSP} and vice versa.
	\item To grasp the difference between the MSP and the naive approach, the following will be calculated:
	\begin{align*}
		\Delta_\text{avg}  & := 100\frac{f_\text{MIP} \left(\xmsp\right) -\zmip}{\zmip} 
		              \qquad \emph{(deteriorating rate)} \\
		\Delta_\text{tail} & := 100\frac{f_\text{MSP} \left(\xmip\right) -\zmsp}{f_\text{MSP} \left(\xmip\right)}  \qquad \emph{(improvement rate)} \\
	\end{align*}
	These quantities reflect what is the effect of making decision $\xmsp$ instead of $\xmip$.
	Large values of $\Delta_\text{avg}$ indicate high penalties for making decision $\xmsp$ instead of $\xmip$ in average scenarios-criteria. Similarly, the larger $\Delta_\text{tail}$ is, the higher benefit is obtained from making decision $\xmsp$ in tail events.
	They will be called \emph{deteriorating rate} and \emph{improvement rate}
\end{itemize}

Models are solved in GAMS 26.1.0 with solver IBM ILOG CPLEX Cplex 12.8.0.0, using a personal computer with an Intel Core i7 processor and 16Gb RAM.

\paragraph{Experiment 1}\todonum{set5}
First experiment will consist on a full factorial design, in which the values of $|I|,|J|,|K|,r,\beta$ fall in these sets:
\begin{itemize}
	\item $|I| \in \{50,100,200\}$
	\item $|J| \in \{5,25,100\}$
	\item $|K| \in \{3,6,9\}$
	\item $r \in \{0.33,0.5,0.67\}$
	\item $\beta \in \{0.05,0.1,0.5\}$
\end{itemize}
For each tuple $(I,J,K)$ random data will be generated, using algorithm~\ref{alg:randominst}, which will then be solved for every pair $(r,\beta)$. All criteria and scenarios are given same importance and probabilities. That is, $w_k = \frac1{|K|}$, $\pi_j = \frac1{|J|}$. Time limit was set in two hours by instance, in which all but three of the $3^5 = 243$ configurations were solved to optimality.

\paragraph{Experiment 2}\todonum{set6}
For the next experiment 100 random instances will be created, keeping the values of $|I|,|J|,|K|,r,\beta$ constant and equal to the median value of the previous experiment. That is, $|I|=100,|J|=25,|K|=6,r=0.5,\beta=0.1$. All criteria and scenarios are given same importance and probabilities. All 100 instances were solved to optimality.

\subsection{Results}
\paragraph{Experiment 1}
Table~\ref{tab:set5:alldata} shows for each of the 243 instances the solution times of the MSP and the MIP models, and the deteriorating and improvement rates of using the MSP solution instead of the MIP solutions (measured in deviation to MIP solution).

Table~\ref{tab:set5:corrmatrix} shows the correlations between times and rates with the parameters of the instance. It can be seen how the MSP solution has a higher impact when fewer scenarios are considered. In addition to that, it can be appreciated that the MSP solution times decrease when $\beta$ increase, that is, when more scenarios are included in the $\beta$-average computation.

\begin{table}[ht!]
	\centering
	\caption{Correlations}
	\label{tab:set5:corrmatrix}
	\begin{tabular}{lrrrrr}\toprule
		{}                   &   |I| &   |J| &   |K| &   $r$ & $\beta$ \\
		\midrule
		$t_\text{MSP}$       &  0.34 &  0.09 & -0.11 & -0.05 & -0.19 \\
		$t_\text{MIP}$       &  0.51 &  0.18 & -0.14 & -0.03 & -0.07 \\
		$\Delta_\text{time}$ &  0.31 &  0.11 & -0.08 & -0.02 & -0.18 \\
		$\Delta_\text{avg}$  & -0.05 & -0.57 & -0.28 & -0.09 & -0.36 \\
		$\Delta_\text{tail}$ & -0.07 & -0.56 & -0.18 & -0.21 & -0.50 \\
		\bottomrule
	\end{tabular}
\end{table}

This appreciation is confirmed by Table~\ref{tab:set5:groupedB}, in which it can be seen that the median \emph{time penalty factor} (how many more times does it take to solve the MSP model than the MIP one) is much smaller when $\beta=0.5$ than when $\beta=0.05$.

\begin{table}[ht!]
	\centering
	\caption{Increase on computing times and MSP solution times, grouped by $\beta$}
	\label{tab:set5:groupedB}
	\resizebox{\linewidth}{!}{
	\begin{tabular}{l||rrrrr|rrrrr}\toprule
		{} & \multicolumn{5}{c|}{$\Delta_\text{time}$} & \multicolumn{5}{c}{$t_\text{MSP}$} \\
		$\beta$ &     min &     mean & median &       max &      std &       min &    mean & median &      max &      std \\
		\midrule
		0.05 &    0.94 &  3188.96 &  32.77 &  50473.04 &  9472.55 &      0.12 &  659.49 &   6.32 &  7222.95 &  1787.07 \\
		0.10 &    0.98 &  1002.35 &  11.09 &  20192.48 &  3245.85 &      0.12 &  212.47 &   2.23 &  4765.42 &   728.49 \\
		0.50 &    1.06 &    19.14 &   3.75 &    414.29 &    55.51 &      0.13 &    3.49 &   0.67 &    62.14 &     9.05 \\
		\bottomrule
	\end{tabular}}
\end{table}

Solution times of the MSP model are alarmingly high for some instances, due to the fact that the admissible integrality gap has been set to zero. If that is relaxed, it can be seen that all of the 243 instances reach an integrality gap smaller than 5\% in under 3 seconds, 2\% in under 5 seconds and 1\% in under 88 seconds.

Table~\ref{tab:set5:groupedRB} groups instances by $r$ and $\beta$, and shows the deteriorating and improvement rates. It can be seen that the improvement rate (in the tail) is generally higher than the deteriorating rate (in the average), especially in cases with small $r$ and $\beta$. 

\begin{table}[ht!]
	\centering
	\caption{Values of $\Delta_\text{avg}$ and $\Delta_\text{tail}$, grouped by $r$ and $\beta$}
	\label{tab:set5:groupedRB}
	\resizebox{\columnwidth}{!}{%
	\begin{tabular}{ll||rrrrr|rrrrr}\toprule
		     &      & \multicolumn{5}{c|}{$\Delta_\text{avg}$} & \multicolumn{5}{c}{$\Delta_\text{tail}$} \\
		 $r$ & $\beta$ &   min &  mean & median &   max &   std &    min &  mean & median &   max &   std \\
		\midrule
		0.33 &    0.05 &  0.03 &  1.94 &   1.87 &  5.68 &  1.42 &   0.28 &  4.37 &   4.21 &  9.18 &  2.43 \\
		     &    0.10 &  0.02 &  1.70 &   1.61 &  5.68 &  1.44 &   0.18 &  3.54 &   2.85 &  9.18 &  2.42 \\
		     &    0.50 &  0.00 &  0.93 &   0.52 &  4.46 &  1.08 &   0.00 &  1.57 &   0.92 &  4.99 &  1.46 \\
		0.50 &    0.05 &  0.03 &  1.87 &   1.90 &  4.30 &  1.30 &   0.29 &  3.58 &   3.30 &  6.73 &  1.89 \\
		     &    0.10 &  0.02 &  1.65 &   1.14 &  4.30 &  1.37 &   0.13 &  2.87 &   2.47 &  6.59 &  1.86 \\
		     &    0.50 &  0.00 &  0.72 &   0.54 &  3.51 &  0.75 &   0.00 &  1.07 &   0.79 &  3.79 &  1.01 \\
		0.67 &    0.05 &  0.03 &  1.64 &   1.17 &  3.93 &  1.24 &   0.32 &  3.04 &   3.06 &  6.15 &  1.62 \\
		     &    0.10 &  0.01 &  1.50 &   1.10 &  3.93 &  1.31 &   0.12 &  2.43 &   2.02 &  5.84 &  1.58 \\
		     &    0.50 &  0.00 &  0.60 &   0.50 &  3.16 &  0.66 &   0.00 &  0.80 &   0.59 &  3.64 &  0.81 \\
		\bottomrule
	\end{tabular}}
\end{table}

This claim is also supported with Figure~\ref{fig:clustersSet5}, where each of the 243 instances is shown according to the values of $\Delta_\text{avg}$ and $\Delta_\text{tail}$, and grouped by the values of $(r,\beta)$. Almost all of the instances ara above the imaginary line $\Delta_\text{avg} = \Delta_\text{tail}$, which shows that considering the MSP solution improves in the tail more than it loses in the average situations. In addition to that, it can be seen that the largest improvements in the tail are on instances with $\beta = 0.05$ (one of the usual values taken for CVaR), and especially with the smallest values of $r$. When $r$ and $\beta$ grow the differences between the MIP and MSP solutions are reduced.

\begin{figure}[ht!]
	\centering
	\includegraphics[width=0.7\textwidth]{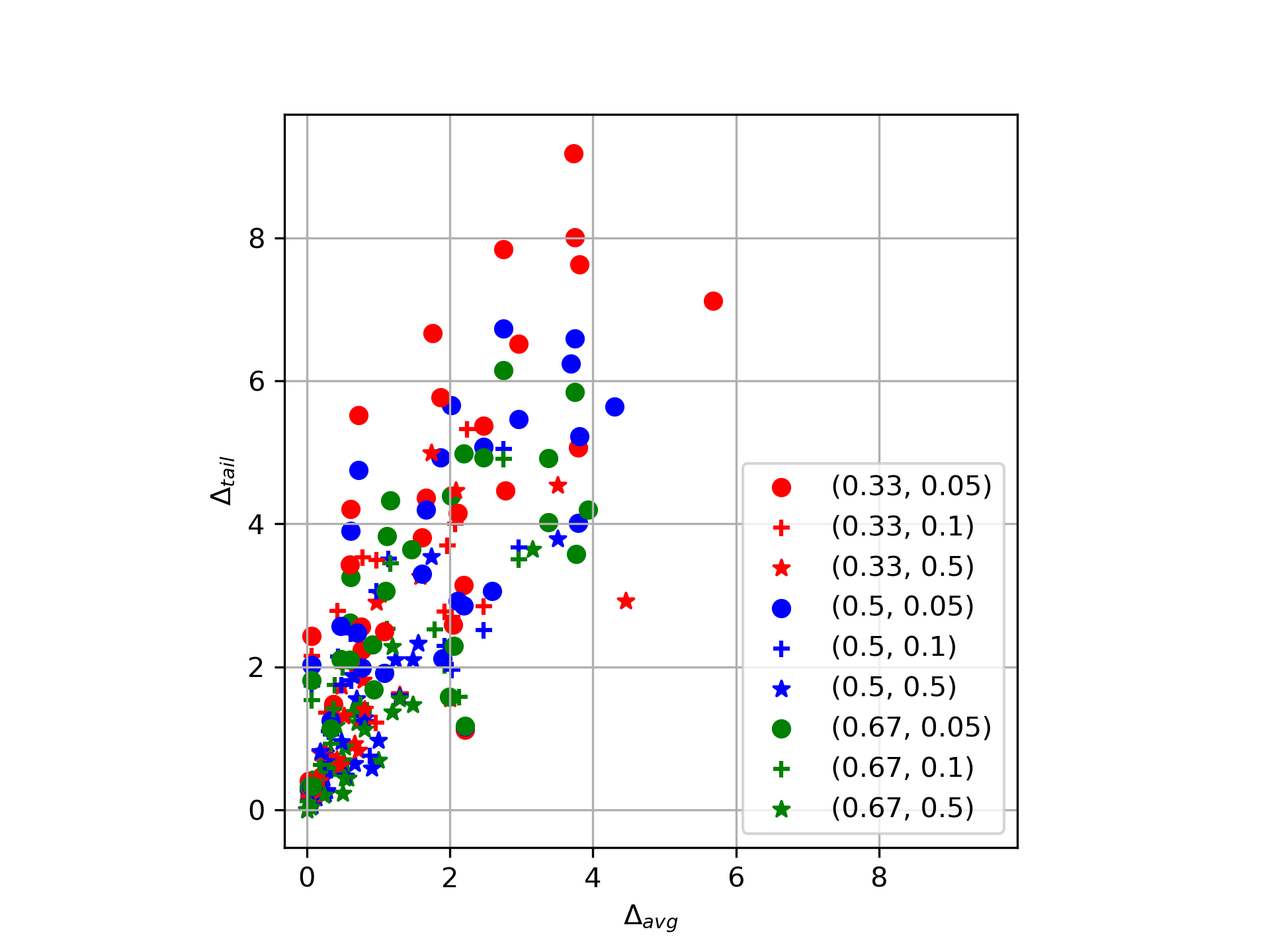}
	\caption{Values $\Delta_\text{avg}$ and $\Delta_\text{tail}$ for each of the 243 instances, grouped by values of $(r,\beta)$}
	\label{fig:clustersSet5}
\end{figure}

\paragraph{Experiment 2}
Table~\ref{tab:set6:alldata} contains the results for each of the 100 instances, all of them with constant parameters $|I|=100,|J|=25,|K|=6,r=0.5,\beta=0.1$.

Table~\ref{tab:set6:Summary} contains a summary of the results, where it is again seen that the improvements in the tail are better than the loses in the average situations. Although single instances might take a long computing time, the median MSP solution time (3.74s) is definitely satisfactory. It is worth mentioning that the models were implemented without providing any extra bounds or known cuts that could reduce solution times.

\begin{table}[ht!]
	\centering
	\caption{Summary of set 6}
	\label{tab:set6:Summary}
	\begin{tabular}{lrrrrr}\toprule
		{} &  $t_\text{MSP}$ &  $t_\text{MIP}$ &  $\Delta_\text{time}$ &   $\Delta_\text{avg}$ &  $\Delta_\text{tail}$ \\
		\midrule
		mean  &      16.98 &       0.20 &    91.31 &    2.03 &    3.09 \\
		std   &      46.57 &       0.03 &   254.68 &    1.12 &    1.49 \\
		min   &       0.53 &       0.14 &     2.81 &    0.16 &    0.86 \\
		25\%   &       1.37 &       0.17 &     6.73 &    1.18 &    2.09 \\
		50\%   &       3.74 &       0.19 &    19.72 &    1.93 &    2.81 \\
		75\%   &      15.50 &       0.21 &    86.19 &    2.52 &    3.51 \\
		max   &     404.70 &       0.34 &  2175.82 &    5.67 &    8.57 \\
		\bottomrule
	\end{tabular}
\end{table}

Finally, figure~\ref{fig:unainstancia} shows the values of $f_k^j(x)$, where $x=\xmip$ in blue squares and $x=\xmsp$ in orange circles, for just one of the created instances. It can be appreciated how $\xmip$ performs better than $\xmsp$ in average criteria-scenarios, but $\xmsp$ is better with unfavourable situations. 

\begin{figure}[ht!]
	\centering
	\begin{subfigure}{.5\textwidth}
		\centering
		\includegraphics[width=\linewidth]{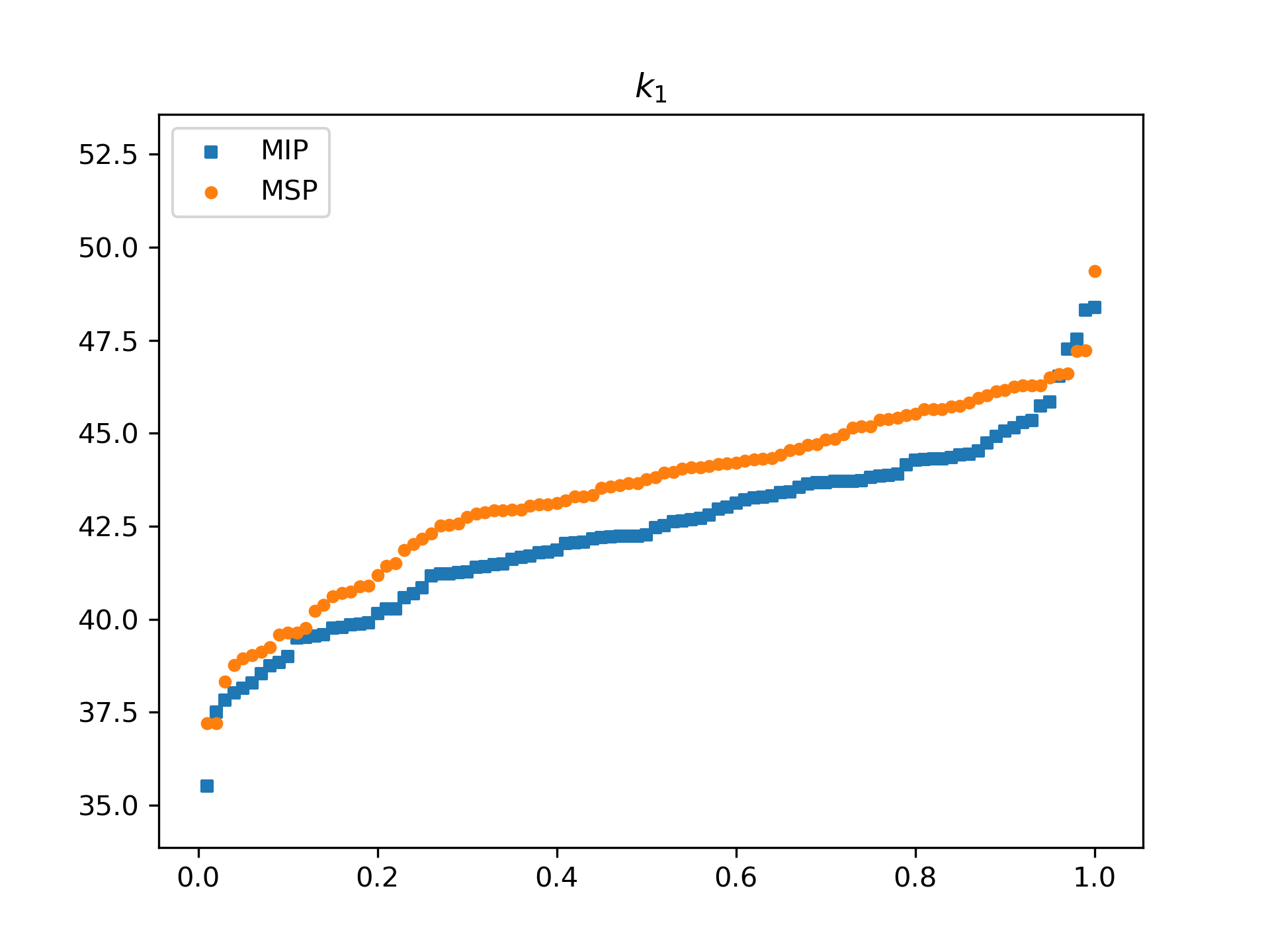}
	\end{subfigure}%
	\begin{subfigure}{.5\textwidth}
		\centering
		\includegraphics[width=\linewidth]{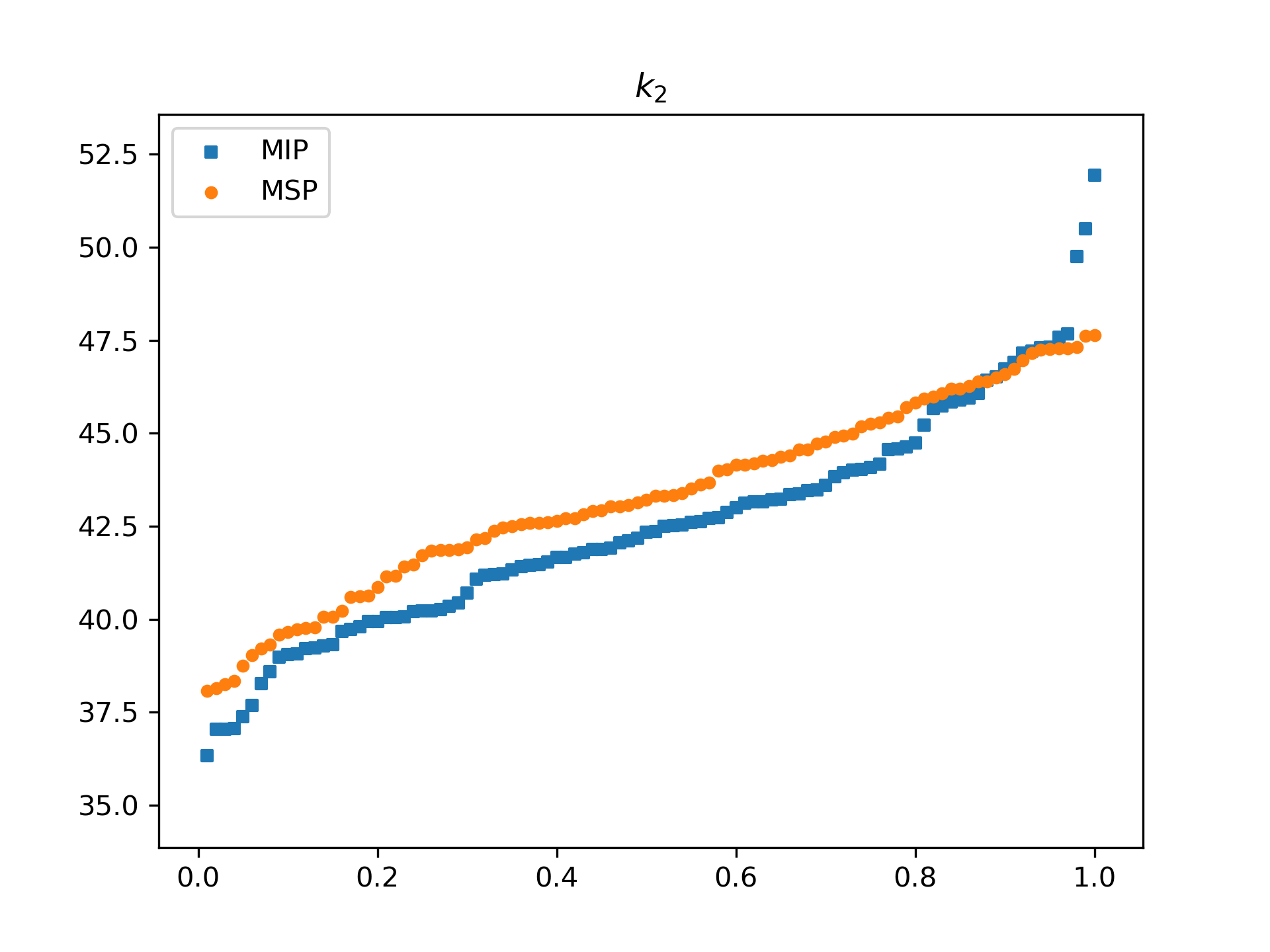}
	\end{subfigure}
	\vskip\baselineskip
	\begin{subfigure}{.5\textwidth}
		\centering
		\includegraphics[width=\linewidth]{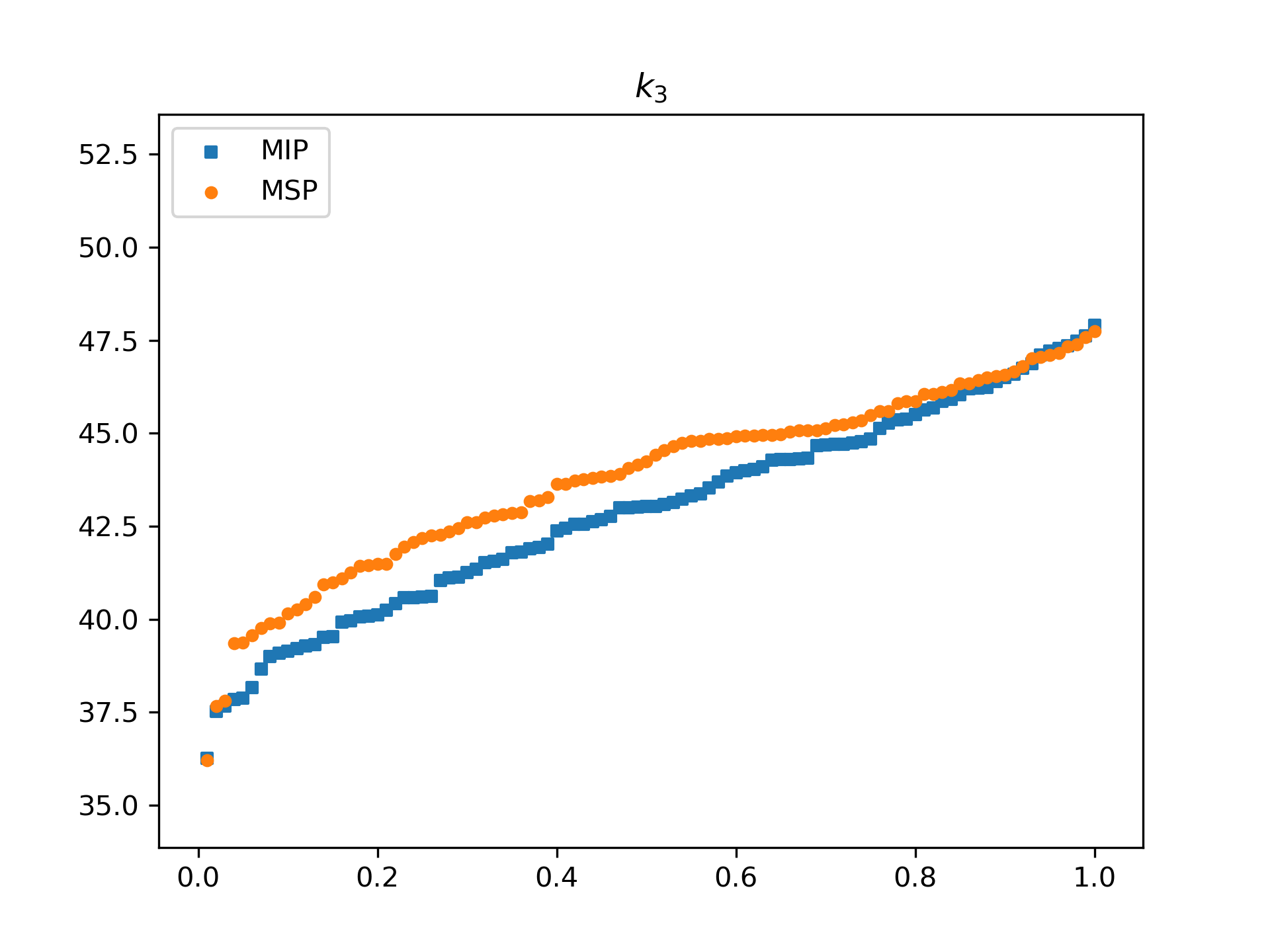}
	\end{subfigure}
	\caption{Single instance with 100 scenarios and 3 criteria. For each $k$, sorted values of $f_k^j(x)$, where $x=\xmip$ in blue squares and $x=\xmsp$ in orange circles}
	\label{fig:unainstancia}
\end{figure}

\section{Conclusions}\label{sec:conclusions}
In this paper a new concept of solution has been proposed for Multiobjective Stochastic Programming problems, focused on risk-aversion. As such, this concept can be particularly useful in real-life situations where there exists a great concern with respect to unfavourable situations, such as emergency management.

The solution concept is supported by an efficient way to compute it by a Mathematical Programming problem. This model is linear provided that the underlying problem can be linearly representable. Numerical experiments have been conducted for validating this approach, solving a multiobjective stochastic knapsack problem.

The research has also shown that the improvements in the tail (unfavourable situations) are consistently higher than loses on average situations, especially when small values of the parameters $\beta$ and $r$ are chosen. These differences, although clearly noticeable, are not as high as one could expect. This is possibly due to the randomness of the data. It is reasonable to assume that in actual real-life problems there are choices that are more conservative for every scenario and criterion, and thus being preferable for risk-aversion attitudes.

Results showed that there is a clear increase in computational time; however this is arguably acceptable as a price to pay for being risk-averse. Furthermore, this could also be due to the random nature of the data. Nevertheless, it was also shown that allowing for even rather small integrality gaps (1\%) leads to drastic improvement on the computing times.

\section*{Acknowledgements}
This work has been supported by the UCM-Santander grant CT27/16-CT28/16, the Government of Spain grants MTM2016-74983-C02-01 and MTM2015-65803-R, and grant H2020 MSCA-RISE 691161 (GEO-SAFE).

\bibliography{inputs/biblioMSP}

\clearpage
\appendix
\section{Extra figures and tables}
\begin{table}[ht!]
	\centering
	\caption{Values of alternative 2 by scenario ($j$) and criteria ($k$)}
	\label{tab:inst1alt2}
	\resizebox{\textwidth}{!}{
	\begin{tabular}{rrr|cccccc|}
		& & & \multicolumn{6}{c|}{criteria} \\
		& & & $w_1 = 0.20$ & $w_2 = 0.10$ & $w_3 = 0.20$ & $w_4 = 0.25$ & $w_5 = 0.15$ & $w_6 = 0.10$ \\
		& & & $k_1$ & $k_2$ & $k_3$ & $k_4$ & $k_5$ & $k_6$ \\ \hline
		\parbox[t]{4mm}{\multirow{5}{*}{\rotatebox[origin=c]{90}{scenarios}}}
		& $\pi_1 = 0.15$ & $j_1$ &  0.40  &  0.58  &  0.39  &  0.45  &  0.54  &  0.18  \\
		& $\pi_2 = 0.20$ & $j_2$ &  0.68  &  0.74  &  0.70  &  0.15  &  0.54  &  0.72  \\
		& $\pi_3 = 0.30$ & $j_3$ &  0.93  &  0.52  &  0.23  &  0.82  &  0.21  &  0.03  \\
		& $\pi_4 = 0.25$ & $j_4$ &  0.37  &  0.85  &  0.07  &  0.42  &  0.52  &  0.22  \\
		& $\pi_5 = 0.10$ & $j_5$ &  0.92  &  0.13  &  0.71  &  0.39  &  0.90  &  0.87  \\ \hline 
		\multicolumn{3}{c|}{$\beta$-average, $\beta = 0.30$} 
		            & 0.930 & 0.832 & 0.703 & 0.820 & 0.660 & 0.770 \\ \hline
		\multicolumn{3}{c|}{$r$-OWA, $r = 0.17$} & \multicolumn{6}{c|}{0.930} \\ \hline
	\end{tabular}}
\end{table}

\begin{table}[ht!]
	\centering
	\caption{Values of alternative 3 by scenario ($j$) and criteria ($k$)}
	\label{tab:inst1alt3}
	\resizebox{\textwidth}{!}{
	\begin{tabular}{rrr|cccccc|}
		& & & \multicolumn{6}{c|}{criteria} \\
		& & & $w_1 = 0.20$ & $w_2 = 0.10$ & $w_3 = 0.20$ & $w_4 = 0.25$ & $w_5 = 0.15$ & $w_6 = 0.10$ \\
		& & & $k_1$ & $k_2$ & $k_3$ & $k_4$ & $k_5$ & $k_6$ \\ \hline
		\parbox[t]{4mm}{\multirow{5}{*}{\rotatebox[origin=c]{90}{scenarios}}}
		& $\pi_1 = 0.15$ & $j_1$ &  0.80  &  0.90  &  0.61  &  0.28  &  0.94  &  0.09  \\
		& $\pi_2 = 0.20$ & $j_2$ &  0.29  &  0.48  &  0.26  &  0.23  &  0.21  &  0.07  \\
		& $\pi_3 = 0.30$ & $j_3$ &  0.73  &  0.65  &  0.32  &  0.56  &  0.95  &  0.65  \\
		& $\pi_4 = 0.25$ & $j_4$ &  0.58  &  0.39  &  0.21  &  0.66  &  0.70  &  0.93  \\
		& $\pi_5 = 0.10$ & $j_5$ &  0.73  &  0.22  &  0.33  &  0.31  &  0.32  &  0.38  \\ \hline 
		\multicolumn{3}{c|}{$\beta$-average, $\beta = 0.30$} 
		            & 0.765 & 0.775 & 0.468 & 0.643 & 0.950 & 0.883 \\ \hline
		\multicolumn{3}{c|}{$r$-OWA, $r = 0.17$} & \multicolumn{6}{c|}{0.943} \\ \hline
	\end{tabular}}
\end{table}

\begin{table}[ht!]
	\centering
	\caption{Values of alternative 4 by scenario ($j$) and criteria ($k$)}
	\label{tab:inst1alt4}
	\resizebox{\textwidth}{!}{
	\begin{tabular}{rrr|cccccc|}
		& & & \multicolumn{6}{c|}{criteria} \\
		& & & $w_1 = 0.20$ & $w_2 = 0.10$ & $w_3 = 0.20$ & $w_4 = 0.25$ & $w_5 = 0.15$ & $w_6 = 0.10$ \\
		& & & $k_1$ & $k_2$ & $k_3$ & $k_4$ & $k_5$ & $k_6$ \\ \hline
		\parbox[t]{4mm}{\multirow{5}{*}{\rotatebox[origin=c]{90}{scenarios}}}
		& $\pi_1 = 0.15$ & $j_1$ &  0.30  &  0.52  &  0.12  &  0.68  &  0.46  &  0.73  \\
		& $\pi_2 = 0.20$ & $j_2$ &  1.00  &  0.57  &  0.46  &  0.82  &  0.90  &  0.72  \\
		& $\pi_3 = 0.30$ & $j_3$ &  0.18  &  0.76  &  0.30  &  0.34  &  0.54  &  0.99  \\
		& $\pi_4 = 0.25$ & $j_4$ &  0.53  &  0.21  &  0.13  &  0.12  &  0.66  &  0.86  \\
		& $\pi_5 = 0.10$ & $j_5$ &  0.98  &  0.46  &  0.50  &  0.29  &  0.27  &  0.40  \\ \hline 
		\multicolumn{3}{c|}{$\beta$-average, $\beta = 0.30$} 
		            & 0.993 & 0.760 & 0.473 & 0.773 & 0.820 & 0.990 \\ \hline
		\multicolumn{3}{c|}{$r$-OWA, $r = 0.17$} & \multicolumn{6}{c|}{0.993} \\ \hline
	\end{tabular}}
\end{table}



\newgeometry{bottom=1cm,top=1cm}
\begingroup
\thispagestyle{empty}
\renewcommand{\arraystretch}{1.3}
\setlength{\tabcolsep}{4pt}
\begin{landscape}
\begin{table}[ht!]
	\centering
	\caption{All instances of first experiment. The three instances with 200 objects, 100 scenarios, 6 criteria and $\beta=0.05$ did not reach the optimal solution in 2 hours. The integrality gaps of the solution shown are $0.31\%, 0.24\%$ and $0.19\%$ for $r=0.33, 0.5$ and $0.67$ respectively}
	\label{tab:set5:alldata}
	\resizebox{\columnwidth}{!}{%
    \begin{tabular}{rrr||rrrr|rrrr|rrrr||rrrr|rrrr|rrrr||rrrr|rrrr|rrrr||}
      \multicolumn{3}{r||}{$\beta\to$} & \multicolumn{12}{c||}{0.05}                                 & \multicolumn{12}{c||}{0.1}                                  & \multicolumn{12}{c||}{0.5} \\ \hline
      \multicolumn{3}{r||}{$r\to$} & \multicolumn{4}{c|}{0.33} & \multicolumn{4}{c|}{0.5} & \multicolumn{4}{c||}{0.67} & \multicolumn{4}{c|}{0.33} & \multicolumn{4}{c|}{0.5} & \multicolumn{4}{c||}{0.67} & \multicolumn{4}{c|}{0.33} & \multicolumn{4}{c|}{0.5} & \multicolumn{4}{c||}{0.67} \\ \hline
    \multicolumn{1}{l}{|I|} & \multicolumn{1}{l}{|J|} & \multicolumn{1}{l||}{|K|} & \multicolumn{1}{l}{$t_\text{MSP}$} & \multicolumn{1}{l}{$t_\text{MIP}$} & \multicolumn{1}{l}{$\Delta_\text{avg}$} & \multicolumn{1}{l|}{$\Delta_\text{tail}$} & \multicolumn{1}{l}{$t_\text{MSP}$} & \multicolumn{1}{l}{$t_\text{MIP}$} & \multicolumn{1}{l}{$\Delta_\text{avg}$} & \multicolumn{1}{l|}{$\Delta_\text{tail}$} & \multicolumn{1}{l}{$t_\text{MSP}$} & \multicolumn{1}{l}{$t_\text{MIP}$} & \multicolumn{1}{l}{$\Delta_\text{avg}$} & \multicolumn{1}{l||}{$\Delta_\text{tail}$} & \multicolumn{1}{l}{$t_\text{MSP}$} & \multicolumn{1}{l}{$t_\text{MIP}$} & \multicolumn{1}{l}{$\Delta_\text{avg}$} & \multicolumn{1}{l|}{$\Delta_\text{tail}$} & \multicolumn{1}{l}{$t_\text{MSP}$} & \multicolumn{1}{l}{$t_\text{MIP}$} & \multicolumn{1}{l}{$\Delta_\text{avg}$} & \multicolumn{1}{l|}{$\Delta_\text{tail}$} & \multicolumn{1}{l}{$t_\text{MSP}$} & \multicolumn{1}{l}{$t_\text{MIP}$} & \multicolumn{1}{l}{$\Delta_\text{avg}$} & \multicolumn{1}{l||}{$\Delta_\text{tail}$} & \multicolumn{1}{l}{$t_\text{MSP}$} & \multicolumn{1}{l}{$t_\text{MIP}$} & \multicolumn{1}{l}{$\Delta_\text{avg}$} & \multicolumn{1}{l|}{$\Delta_\text{tail}$} & \multicolumn{1}{l}{$t_\text{MSP}$} & \multicolumn{1}{l}{$t_\text{MIP}$} & \multicolumn{1}{l}{$\Delta_\text{avg}$} & \multicolumn{1}{l|}{$\Delta_\text{tail}$} & \multicolumn{1}{l}{$t_\text{MSP}$} & \multicolumn{1}{l}{$t_\text{MIP}$} & \multicolumn{1}{l}{$\Delta_\text{avg}$} & \multicolumn{1}{l||}{$\Delta_\text{tail}$} \\ \hline
		\hline
		50  & 5   & 3 &     0.12 &  0.13 &  3.75 &   8.01 &     0.15 &  0.12 &  3.75 &   8.01 &   0.18 &  0.14 &  3.51 &   4.54 &     0.14 &  0.14 &  3.75 &   6.59 &     0.12 &  0.12 &  3.75 &   6.59 &   0.20 &  0.17 &  3.51 &   3.79 &     0.12 &  0.12 &  3.75 &   5.84 &     0.12 &  0.12 &  3.75 &   5.84 &  0.13 &  0.12 &  3.16 &   3.64 \\
		    &     & 6 &     0.22 &  0.11 &  3.79 &   5.07 &     0.25 &  0.11 &  3.79 &   5.07 &   0.18 &  0.11 &  1.03 &   3.01 &     0.30 &  0.13 &  3.79 &   4.01 &     0.25 &  0.13 &  3.79 &   4.01 &   0.23 &  0.15 &  1.48 &   2.10 &     0.23 &  0.16 &  3.77 &   3.58 &     0.23 &  0.14 &  3.77 &   3.58 &  0.18 &  0.17 &  1.48 &   1.47 \\
		    &     & 9 &     0.28 &  0.14 &  1.76 &   6.67 &     0.36 &  0.14 &  1.76 &   6.67 &   0.20 &  0.15 &  1.58 &   3.26 &     0.22 &  0.14 &  2.02 &   5.66 &     0.21 &  0.14 &  2.02 &   5.66 &   0.22 &  0.14 &  1.24 &   2.10 &     0.20 &  0.15 &  2.02 &   4.39 &     0.22 &  0.13 &  2.02 &   4.39 &  0.25 &  0.15 &  1.20 &   1.37 \\
		    & 25  & 3 &     0.76 &  0.18 &  2.47 &   5.37 &     0.66 &  0.17 &  2.47 &   2.85 &   0.26 &  0.17 &  2.00 &   1.55 &     0.64 &  0.18 &  2.47 &   5.08 &     0.62 &  0.16 &  2.47 &   2.51 &   0.25 &  0.15 &  1.00 &   0.97 &     0.60 &  0.17 &  2.47 &   4.93 &     0.51 &  0.16 &  1.79 &   2.52 &  0.26 &  0.14 &  1.00 &   0.69 \\
		    &     & 6 &     1.20 &  0.16 &  1.87 &   5.77 &     0.91 &  0.29 &  1.96 &   3.70 &   0.49 &  0.18 &  0.79 &   1.81 &     1.15 &  0.18 &  1.87 &   4.93 &     0.74 &  0.18 &  1.14 &   3.51 &   0.41 &  0.18 &  0.70 &   1.55 &     0.93 &  0.16 &  1.17 &   4.33 &     0.78 &  0.18 &  1.17 &   3.45 &  0.38 &  0.16 &  0.71 &   1.22 \\
		    &     & 9 &     0.69 &  0.16 &  0.61 &   4.21 &     0.78 &  0.16 &  0.43 &   2.78 &   0.57 &  0.16 &  0.67 &   0.92 &     0.52 &  0.15 &  0.61 &   3.90 &     0.96 &  0.16 &  0.44 &   2.14 &   0.61 &  0.16 &  0.67 &   0.65 &     0.69 &  0.15 &  0.61 &   3.25 &     1.02 &  0.18 &  0.39 &   1.75 &  0.47 &  0.18 &  0.51 &   0.68 \\
		    & 100 & 3 &     1.15 &  0.15 &  0.07 &   2.43 &     0.78 &  0.15 &  0.07 &   2.15 &   0.44 &  0.14 &  0.07 &   0.16 &     1.07 &  0.14 &  0.07 &   2.02 &     0.83 &  0.19 &  0.07 &   1.74 &   0.34 &  0.14 &  0.07 &   0.16 &     1.14 &  0.15 &  0.07 &   1.81 &     0.85 &  0.16 &  0.07 &   1.53 &  0.36 &  0.14 &  0.07 &   0.14 \\
		    &     & 6 &     2.51 &  0.20 &  0.77 &   2.24 &     4.45 &  0.22 &  0.78 &   1.19 &   3.52 &  0.20 &  0.23 &   0.47 &     2.62 &  0.25 &  0.77 &   1.99 &     5.09 &  0.18 &  0.31 &   1.10 &   5.45 &  0.20 &  0.19 &   0.29 &     2.70 &  0.22 &  0.77 &   1.38 &     3.31 &  0.19 &  0.47 &   0.63 &  5.59 &  0.19 &  0.23 &   0.27 \\
		    &     & 9 &     4.06 &  0.18 &  0.03 &   0.41 &     1.47 &  0.17 &  0.03 &   0.30 &   1.07 &  0.16 &  0.00 &   0.00 &     2.75 &  0.17 &  0.03 &   0.29 &     1.12 &  0.16 &  0.03 &   0.30 &   1.11 &  0.16 &  0.00 &   0.00 &     2.06 &  0.19 &  0.03 &   0.32 &     1.10 &  0.16 &  0.03 &   0.18 &  1.16 &  0.17 &  0.00 &   0.00 \\
		100 & 5   & 3 &     1.24 &  0.26 &  3.81 &   7.63 &     1.29 &  0.20 &  3.81 &   7.63 &   0.37 &  0.22 &  2.08 &   4.47 &     0.72 &  0.18 &  3.81 &   5.22 &     0.66 &  0.18 &  3.81 &   5.22 &   0.32 &  0.27 &  1.56 &   2.33 &     0.74 &  0.19 &  3.38 &   4.02 &     1.13 &  0.23 &  3.38 &   4.02 &  0.28 &  0.20 &  0.63 &   1.36 \\
		    &     & 6 &     8.68 &  0.22 &  5.68 &   7.12 &     8.69 &  0.19 &  5.68 &   7.12 &   0.43 &  0.17 &  4.46 &   2.92 &     0.65 &  0.20 &  4.30 &   5.64 &     1.06 &  0.19 &  4.30 &   5.64 &   0.35 &  0.17 &  0.81 &   1.28 &     1.18 &  0.22 &  3.93 &   4.20 &     0.64 &  0.18 &  3.93 &   4.20 &  0.28 &  0.18 &  0.53 &   0.88 \\
		    &     & 9 &     3.31 &  0.18 &  2.19 &   3.14 &     3.26 &  0.20 &  2.19 &   3.14 &   0.67 &  0.18 &  0.97 &   2.90 &     1.04 &  0.20 &  2.19 &   2.86 &     0.96 &  0.20 &  2.19 &   2.86 &   0.23 &  0.14 &  0.64 &   1.88 &     1.02 &  0.17 &  0.92 &   2.31 &     0.90 &  0.17 &  0.92 &   2.31 &  0.24 &  0.18 &  0.41 &   1.18 \\
		    & 25  & 3 &    10.65 &  0.17 &  2.96 &   6.52 &     3.39 &  0.18 &  2.07 &   4.00 &   0.29 &  0.15 &  0.48 &   1.73 &     7.09 &  0.18 &  2.96 &   5.46 &     1.83 &  0.19 &  2.96 &   3.67 &   0.30 &  0.14 &  0.48 &   0.95 &     3.46 &  0.19 &  2.19 &   4.98 &     1.30 &  0.16 &  2.96 &   3.50 &  0.34 &  0.16 &  0.41 &   0.59 \\
		    &     & 6 &    32.12 &  0.20 &  2.78 &   4.47 &     9.18 &  0.19 &  0.78 &   3.53 &   0.44 &  0.18 &  0.52 &   1.31 &    26.53 &  0.18 &  2.59 &   3.06 &     3.64 &  0.22 &  0.61 &   2.47 &   0.32 &  0.15 &  0.26 &   0.79 &    12.77 &  0.17 &  0.60 &   2.62 &     0.90 &  0.17 &  0.50 &   2.00 &  0.41 &  0.17 &  0.26 &   0.65 \\
		    &     & 9 &     8.58 &  0.18 &  0.72 &   5.52 &     1.90 &  0.17 &  0.97 &   3.49 &   0.42 &  0.18 &  0.24 &   0.82 &     6.32 &  0.19 &  0.72 &   4.75 &     1.24 &  0.16 &  0.97 &   3.06 &   0.51 &  0.20 &  0.24 &   0.44 &     1.60 &  0.19 &  1.12 &   3.83 &     0.88 &  0.19 &  1.12 &   2.53 &  0.59 &  0.17 &  0.50 &   0.23 \\
		    & 100 & 3 &    51.23 &  0.22 &  2.21 &   1.12 &     1.67 &  0.21 &  0.27 &   1.36 &   0.82 &  0.18 &  0.09 &   0.25 &    22.70 &  0.21 &  2.21 &   1.16 &     1.22 &  0.19 &  0.34 &   1.05 &   0.81 &  0.18 &  0.05 &   0.17 &    18.75 &  0.16 &  2.21 &   1.17 &     0.84 &  0.22 &  0.34 &   0.92 &  0.75 &  0.18 &  0.05 &   0.13 \\
		    &     & 6 &    48.25 &  0.18 &  0.76 &   2.56 &    31.87 &  0.17 &  0.62 &   2.05 &  62.14 &  0.15 &  0.42 &   0.70 &    24.73 &  0.18 &  0.71 &   2.48 &    27.08 &  0.18 &  0.62 &   1.81 &  42.18 &  0.19 &  0.28 &   0.55 &    20.26 &  0.17 &  0.60 &   2.10 &    22.09 &  0.20 &  0.75 &   1.48 &  7.79 &  0.20 &  0.17 &   0.50 \\
		    &     & 9 &     2.16 &  0.19 &  0.37 &   1.48 &     3.34 &  0.18 &  0.29 &   0.77 &   1.84 &  0.17 &  0.18 &   0.41 &     1.80 &  0.17 &  0.34 &   1.25 &     2.87 &  0.20 &  0.28 &   0.69 &   2.22 &  0.19 &  0.26 &   0.22 &     1.67 &  0.18 &  0.34 &   1.14 &     2.77 &  0.19 &  0.20 &   0.63 &  3.09 &  0.16 &  0.08 &   0.13 \\
		200 & 5   & 3 &   146.24 &  0.23 &  1.61 &   3.81 &   140.12 &  0.20 &  1.61 &   3.81 &   7.71 &  0.23 &  1.30 &   1.61 &   151.22 &  0.21 &  1.61 &   3.30 &   135.09 &  0.24 &  1.61 &   3.30 &   4.60 &  0.21 &  1.30 &   1.58 &    83.44 &  0.22 &  1.10 &   3.06 &    89.20 &  0.21 &  1.10 &   3.06 &  4.21 &  0.22 &  1.30 &   1.55 \\
		    &     & 6 &    88.70 &  0.19 &  1.08 &   2.50 &    89.69 &  0.19 &  1.08 &   2.50 &   5.14 &  0.17 &  0.72 &   0.83 &    96.44 &  0.19 &  1.08 &   1.91 &    91.66 &  0.18 &  1.08 &   1.91 &   2.93 &  0.18 &  0.91 &   0.58 &    39.26 &  0.18 &  0.94 &   1.68 &    32.92 &  0.18 &  0.94 &   1.68 &  0.70 &  0.17 &  0.58 &   0.44 \\
		    &     & 9 &   468.37 &  0.15 &  3.73 &   9.18 &   484.89 &  0.14 &  3.73 &   9.18 &  29.46 &  0.16 &  1.74 &   4.99 &   304.04 &  0.16 &  3.69 &   6.24 &   305.90 &  0.16 &  3.69 &   6.24 &   2.71 &  0.16 &  1.74 &   3.54 &   110.03 &  0.15 &  3.38 &   4.92 &   107.34 &  0.14 &  3.38 &   4.92 &  0.91 &  0.17 &  1.20 &   2.28 \\
		    & 25  & 3 &  5629.58 &  0.33 &  2.75 &   7.84 &  4765.42 &  0.24 &  2.24 &   5.33 &   4.86 &  0.24 &  0.81 &   1.40 &  5430.90 &  0.25 &  2.75 &   6.73 &  3394.56 &  0.24 &  2.75 &   5.05 &   5.32 &  0.28 &  0.81 &   1.22 &  6896.05 &  0.25 &  2.75 &   6.15 &  2546.43 &  0.21 &  2.75 &   4.91 &  5.66 &  0.34 &  0.81 &   1.13 \\
		    &     & 6 &  2886.13 &  0.19 &  1.67 &   4.36 &   146.48 &  0.17 &  1.93 &   2.77 &   0.57 &  0.17 &  0.19 &   0.79 &  1651.91 &  0.21 &  1.67 &   4.20 &    15.06 &  0.22 &  1.93 &   2.29 &   0.71 &  0.21 &  0.19 &   0.81 &    93.66 &  0.19 &  1.46 &   3.64 &    19.36 &  0.19 &  1.93 &   2.02 &  0.55 &  0.18 &  0.12 &   0.40 \\
		    &     & 9 &  1235.12 &  0.32 &  2.05 &   2.59 &   342.32 &  0.22 &  0.96 &   1.22 &   1.99 &  0.21 &  0.22 &   0.26 &   404.70 &  0.29 &  1.90 &   2.12 &    28.09 &  0.21 &  0.88 &   0.76 &   0.82 &  0.20 &  0.13 &   0.17 &    99.73 &  0.21 &  1.99 &   1.58 &     2.23 &  0.22 &  0.39 &   0.58 &  0.87 &  0.20 &  0.06 &   0.08 \\
		    & 100 & 3 &   703.05 &  0.23 &  2.11 &   4.15 &   373.65 &  0.22 &  2.03 &   2.70 &   1.42 &  0.22 &  0.47 &   0.63 &   731.09 &  0.22 &  2.11 &   2.92 &   157.29 &  0.20 &  2.03 &   1.96 &   1.11 &  0.22 &  0.54 &   0.47 &   596.88 &  0.27 &  2.06 &   2.29 &   349.78 &  0.30 &  2.13 &   1.58 &  3.22 &  0.29 &  0.53 &   0.44 \\
		    &     & 6 &  7222.95 &  0.22 &  0.60 &   3.43 &  1814.25 &  0.18 &  0.48 &   2.08 &  22.11 &  0.21 &  0.13 &   0.44 &  7217.64 &  0.14 &  0.47 &   2.57 &   916.42 &  0.21 &  0.48 &   1.75 &   7.04 &  0.24 &  0.28 &   0.27 &  7216.94 &  0.15 &  0.47 &   2.11 &   656.48 &  0.20 &  0.37 &   1.41 &  7.89 &  0.22 &  0.24 &   0.20 \\
		    &     & 9 &  3321.23 &  0.34 &  0.07 &   0.28 &    16.40 &  0.20 &  0.02 &   0.18 &   2.33 &  0.17 &  0.08 &   0.08 &   198.14 &  0.19 &  0.07 &   0.32 &    14.84 &  0.21 &  0.02 &   0.13 &   2.31 &  0.21 &  0.08 &   0.05 &    47.16 &  0.23 &  0.08 &   0.33 &     9.77 &  0.21 &  0.01 &   0.12 &  2.63 &  0.20 &  0.06 &   0.04 \\
		\hline
	\end{tabular}}
\end{table}
\end{landscape}
\endgroup
\restoregeometry
\newgeometry{bottom=3cm,top=3cm}
\begin{table}[ht!]
	\centering
	\caption{All instances of second experiment. $|I|=100,|J|=25,|K|=6,r=0.5,\beta=0.1$}
	\label{tab:set6:alldata}
	\begin{tabular}{rrrr||rrrr}\toprule
		 $t_\text{MSP}$ &  $t_\text{MIP}$ & $\Delta_\text{avg}$ & $\Delta_\text{tail}$ & $t_\text{MSP}$ &  $t_\text{MIP}$ & $\Delta_\text{avg}$ & $\Delta_\text{tail}$ \\
		\midrule
		  31.15 & 0.23 & 1.53 & 3.20 &    2.15 & 0.16 & 2.24 & 2.09 \\
		   1.92 & 0.21 & 1.66 & 6.17 &   20.09 & 0.16 & 1.80 & 6.14 \\
		   8.75 & 0.24 & 0.52 & 3.07 &    7.18 & 0.16 & 2.13 & 1.93 \\
		  28.06 & 0.23 & 5.08 & 2.86 &    1.02 & 0.16 & 3.03 & 3.61 \\
		   1.36 & 0.30 & 1.00 & 1.80 &    3.58 & 0.24 & 1.81 & 6.12 \\
		   3.67 & 0.20 & 2.27 & 2.50 &    3.64 & 0.19 & 1.19 & 3.07 \\
		   2.00 & 0.20 & 2.51 & 2.03 &  128.69 & 0.23 & 3.27 & 2.98 \\
		 192.11 & 0.16 & 2.61 & 8.23 &    0.89 & 0.18 & 1.45 & 0.93 \\
		   0.94 & 0.20 & 0.43 & 2.23 &    1.62 & 0.23 & 1.85 & 3.56 \\
		   0.80 & 0.18 & 1.64 & 2.55 &    4.19 & 0.22 & 2.10 & 1.97 \\
		  16.40 & 0.19 & 2.23 & 2.45 &    2.16 & 0.19 & 0.16 & 1.46 \\
		   1.21 & 0.18 & 2.82 & 1.50 &    1.46 & 0.24 & 2.48 & 2.00 \\
		   1.79 & 0.20 & 0.72 & 2.77 &    0.69 & 0.20 & 1.79 & 2.54 \\
		  21.78 & 0.21 & 4.50 & 4.61 &   20.73 & 0.20 & 2.26 & 3.50 \\
		   1.35 & 0.19 & 0.69 & 0.86 &    1.86 & 0.24 & 1.77 & 2.63 \\
		  31.11 & 0.19 & 0.98 & 3.21 &   14.92 & 0.17 & 1.99 & 8.57 \\
		   8.44 & 0.19 & 1.82 & 3.81 &    0.78 & 0.20 & 0.85 & 1.92 \\
		   1.75 & 0.21 & 0.88 & 0.92 &   10.48 & 0.23 & 2.50 & 2.29 \\
		   1.94 & 0.21 & 2.18 & 2.65 &    1.63 & 0.24 & 2.08 & 2.29 \\
		   0.98 & 0.20 & 0.87 & 3.27 &   10.78 & 0.18 & 0.34 & 1.80 \\
		  27.72 & 0.22 & 2.03 & 5.20 &   38.80 & 0.20 & 1.96 & 4.69 \\
		  14.72 & 0.15 & 3.34 & 0.99 &   19.74 & 0.24 & 0.65 & 2.20 \\
		   0.67 & 0.24 & 0.81 & 2.69 &    1.37 & 0.30 & 2.82 & 2.92 \\
		   3.54 & 0.20 & 2.64 & 2.75 &    6.28 & 0.19 & 2.02 & 2.08 \\
		   6.37 & 0.21 & 2.79 & 6.35 &   22.27 & 0.34 & 1.91 & 3.13 \\
		   1.86 & 0.23 & 0.93 & 2.09 &    1.69 & 0.20 & 2.21 & 2.42 \\
		   1.54 & 0.20 & 2.00 & 3.45 &   27.77 & 0.19 & 0.76 & 3.28 \\
		  40.16 & 0.17 & 2.06 & 3.44 &    2.00 & 0.21 & 2.57 & 1.93 \\
		   7.23 & 0.21 & 3.17 & 3.17 &    2.61 & 0.18 & 2.14 & 3.33 \\
		   5.77 & 0.17 & 2.84 & 1.98 &   40.93 & 0.18 & 1.53 & 4.61 \\
		   2.10 & 0.19 & 1.39 & 3.00 &   18.84 & 0.16 & 0.89 & 4.37 \\
		 404.70 & 0.19 & 1.50 & 2.82 &   11.26 & 0.16 & 3.98 & 4.76 \\
		  24.26 & 0.18 & 4.81 & 3.42 &   14.41 & 0.18 & 1.82 & 5.87 \\
		   0.76 & 0.20 & 1.28 & 3.88 &   12.14 & 0.16 & 2.75 & 2.75 \\
		   0.64 & 0.20 & 0.87 & 1.39 &   12.58 & 0.17 & 1.42 & 3.46 \\
		   0.97 & 0.23 & 1.77 & 2.19 &    0.84 & 0.18 & 0.41 & 2.15 \\
		   0.53 & 0.18 & 1.95 & 2.04 &    5.20 & 0.19 & 3.80 & 2.02 \\
		   7.24 & 0.22 & 2.21 & 1.68 &   28.16 & 0.15 & 4.68 & 3.56 \\
		   0.87 & 0.25 & 0.71 & 1.42 &   39.10 & 0.16 & 3.47 & 3.59 \\
		   8.51 & 0.20 & 2.48 & 4.06 &   19.22 & 0.16 & 3.78 & 3.13 \\
		  13.06 & 0.20 & 4.44 & 2.80 &    0.56 & 0.20 & 0.63 & 3.22 \\
		  59.78 & 0.20 & 5.67 & 4.91 &    0.68 & 0.17 & 1.92 & 2.44 \\
		  67.50 & 0.19 & 2.96 & 3.02 &    0.70 & 0.17 & 1.86 & 1.40 \\
		   3.80 & 0.17 & 0.79 & 1.20 &   15.20 & 0.17 & 0.78 & 2.66 \\
		   3.25 & 0.20 & 2.24 & 1.57 &    0.88 & 0.17 & 2.31 & 2.13 \\
		   5.23 & 0.16 & 1.14 & 4.91 &    0.59 & 0.15 & 1.78 & 1.68 \\
		   0.71 & 0.17 & 0.89 & 3.01 &    1.08 & 0.20 & 2.21 & 3.14 \\
		   4.19 & 0.17 & 3.09 & 2.32 &    1.14 & 0.17 & 0.76 & 2.48 \\
		   3.53 & 0.18 & 1.37 & 6.33 &    1.58 & 0.19 & 1.45 & 3.14 \\
  19.99 & 0.14 & 3.48 & 5.05 &   13.48 & 0.18 & 1.71 & 5.41 \\
		\bottomrule
	\end{tabular}
\end{table}

\restoregeometry
\end{document}